\documentclass[oneside,english]{amsart}
\usepackage[T1]{fontenc}
\usepackage[latin9]{inputenc}
\usepackage{amstext}
\usepackage{amsthm}
\usepackage{amssymb}

\makeatletter
\numberwithin{equation}{section}
\numberwithin{figure}{section}
\theoremstyle{plain}
\newtheorem{thm}{\protect\theoremname}[section]
\theoremstyle{plain}
\newtheorem{conjecture}[thm]{\protect\conjecturename}
\theoremstyle{remark}
\newtheorem{rem}[thm]{\protect\remarkname}
\theoremstyle{plain}
\newtheorem{lem}[thm]{\protect\lemmaname}
\theoremstyle{plain}
\newtheorem{cor}[thm]{\protect\corollaryname}
\theoremstyle{plain}
\newtheorem{prop}[thm]{\protect\propositionname}

\usepackage[top=30truemm,bottom=30truemm,left=25truemm,right=25truemm]{geometry}
\usepackage{graphics}

\newcommand{\stodd}{\reflectbox{$\ddots$}}

\allowdisplaybreaks[4]

\author{Nozomi Ito}
\keywords{Miyawaki lifts, refined GGP conjecture}
\subjclass{Primary 11F67; Secondary 22E50}


\makeatother

\usepackage{babel}
\providecommand{\conjecturename}{Conjecture}
\providecommand{\corollaryname}{Corollary}
\providecommand{\lemmaname}{Lemma}
\providecommand{\propositionname}{Proposition}
\providecommand{\remarkname}{Remark}
\providecommand{\theoremname}{Theorem}

\begin{document}
\global\long\def\ad{\text{ad}}%
\global\long\def\abs{|\cdot|}%
\global\long\def\irr{\mathrm{Irr}}%
\global\long\def\spec{\mathrm{Spec}}%
\global\long\def\gll{\mathrm{GL}}%
\global\long\def\nat{\mathbb{\mathbb{Z}}_{>0}}%
\global\long\def\defi{\overset{\mathrm{def}}{\iff}}%
\global\long\def\map{\rightarrow}%
\global\long\def\ep{\varepsilon}%
\global\long\def\sll{\mathrm{SL}}%
\global\long\def\spp{\mathrm{Sp}}%
\global\long\def\diag{\mathrm{diag}}%
\global\long\def\tr{\mathrm{Tr}}%
\global\long\def\akk{\mathbb{A}_{k}}%
\global\long\def\aff{\mathbb{A}_{F}}%
\global\long\def\zz{\mathbb{Z}}%
\global\long\def\rr{\mathbb{R}}%
\global\long\def\qq{\mathbb{Q}}%
\global\long\def\cc{\mathbb{C}}%
\global\long\def\inj{\hookrightarrow}%
\global\long\def\surj{\twoheadrightarrow}%
\global\long\def\oo{\infty}%
\global\long\def\bu{\mathrm{U}}%
\global\long\def\sgn{\mathrm{\mathrm{sgn}}}%
\global\long\def\ind{\mathrm{\mathrm{Ind}}}%
\global\long\def\vol{\mathrm{\mathrm{vol}}}%
\global\long\def\aqq{\mathbb{A}_{\qq}}%
\global\long\def\nami{\rightsquigarrow}%
\global\long\def\lig{\mathfrak{g}}%
\global\long\def\lik{\mathfrak{k}}%
\global\long\def\lip{\mathfrak{p}}%
\global\long\def\lit{\mathfrak{t}}%
\global\long\def\liu{\mathfrak{u}}%
\global\long\def\lih{\mathfrak{h}}%
\global\long\def\rint{\mathcal{O}}%

\title{On the formula for the norms of Miyawaki lifts}
\begin{abstract}
The Miyawaki lifting is a lifting of Siegel modular forms introduced
by Ikeda in his 2006 paper. In the same paper, he also conjectured
a formula for the norms of Miyawaki lifts. In this paper, we show
that his conjectural formula can be rewritten into a refined GGP type
formula.
\end{abstract}

\maketitle

\section{Introduction}

Let $\mathbf{f}\in S_{2k}(\sll_{2}(\zz))$ be a normalized Hecke eigenform,
where $k\in\zz_{>0}$. Denote the Ikeda lift (see \cite{MR1884618})
of $\mathbf{f}$ in $S_{k+n+r}(\spp_{4n+4r}(\zz))$ by $\mathbf{F}$,
where $n\in\zz_{\geq0}$ and $r\in\zz_{>0}$ such that $k+n+r\in2\zz$.
For a Hecke eigenform $\mathbf{g}\in S_{k+n+r}(\spp_{2r}(\zz)),$
define $\mathbf{F_{g}}\in S_{k+n+r}(\spp_{4n+2r}(\zz))$ by

\[
\mathbf{F_{g}}(Z)=\langle\mathbf{F}|_{\lih_{2n+r}\times\lih_{r}}(Z,-),\mathbf{g}\rangle
\]
for $Z\in\lih_{2n+r}$, where $\lih_{2n+r}\times\lih_{r}=\diag(\lih_{2n+r},\lih_{r})$
and $\langle-,-\rangle$ denotes the Petersson inner product. This
cusp form is called the Miyawaki lift of $\mathbf{g}$ with respect
to $\mathbf{F}$, which was introduced by Ikeda in \cite{MR2219248}.
It has the following remarkable property.
\begin{thm}[{\cite[Theorem 1.1]{MR2219248}}]
If $\mathbf{F_{g}}\not\equiv0,$ then it is a Hecke eigenform whose
(non-complete) standard L-function $L(s,\mathbf{F_{g}},\mathrm{st})$
satisfies

\[
L(s,\mathbf{F_{g}},\mathrm{st})=L(s,\mathbf{g},\mathrm{st})\prod_{i=1}^{2n}L(s+k+n-i,\mathbf{f}).
\]
\end{thm}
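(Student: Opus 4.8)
The plan is to determine the Hecke eigenvalues of $\mathbf{F_g}$ at every prime $p$ by combining the explicitly known Satake parameters of the Ikeda lift $\mathbf{F}$ with a Hecke-equivariance property of the operation ``restrict to $\lih_{2n+r}\times\lih_{r}$ and pair with $\mathbf{g}$''. First recall the structure of $\mathbf{F}$: by Ikeda's lifting theorem it is a Hecke eigenform in $S_{k+n+r}(\spp_{4n+4r}(\zz))$ with $L(s,\mathbf{F},\mathrm{st})=\zeta(s)\prod_{i=1}^{2n+2r}L(s+k+n+r-i,\mathbf{f})$. The decisive feature, used repeatedly below, is the \emph{rigidity} of this expression: apart from the $\zeta$-factor it is a product of copies of $L(s,\mathbf{f})$ whose shifts $k+n+r-i$, $1\le i\le 2n+2r$, run through a single arithmetic progression, so that on the dual side the Satake parameters of $\mathbf{F}$ at each prime $p$ are rigidly tied through this fixed progression of exponents to the Satake parameters $\alpha_{p}^{\pm1}$ of $\mathbf{f}$ (normalized by $a_{p}(\mathbf{f})=p^{(2k-1)/2}(\alpha_{p}+\alpha_{p}^{-1})$). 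This rigidity will make the branching computation below forced rather than merely consistent.

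Next I set up the pullback. Under the embedding $\lih_{2n+r}\times\lih_{r}\hookrightarrow\lih_{2n+2r}$, $(Z,W)\mapsto\diag(Z,W)$, the restriction $\mathbf{F}|_{\lih_{2n+r}\times\lih_{r}}$ defines, in the $W$-variable, an element of $S_{k+n+r}(\spp_{4n+2r}(\zz))\otimes\big(S_{k+n+r}(\spp_{2r}(\zz))\oplus(\text{non-cuspidal part})\big)$; pairing against the cusp form $\mathbf{g}$ kills the non-cuspidal contribution, so $\mathbf{F_g}=(\mathrm{id}\otimes\langle-,\mathbf{g}\rangle)\big(\mathbf{F}|_{\lih_{2n+r}\times\lih_{r}}\big)$ belongs to $S_{k+n+r}(\spp_{4n+2r}(\zz))$ with no further work. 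The heart of the proof is then the following Hecke-equivariance statement: for a prime $p$ and a Hecke operator $T$ of genus $2n+r$ acting on the $Z$-variable, $T_{Z}\big(\mathbf{F}(\diag(Z,W))\big)$ can be rewritten as a finite linear combination, with controlled coefficients, of functions $(U_{\nu}\mathbf{F})(\diag(Z,W))\big|S_{\nu}$, where each $U_{\nu}$ is a genus-$(2n+2r)$ similitude ($\mathrm{GSp}$) Hecke operator and each $S_{\nu}$ is a genus-$r$ Hecke operator acting on $W$.

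Granting this, the argument closes as follows. Inserting the identity into the Petersson inner product against $\mathbf{g}$ and using that $\mathbf{F}$ is an eigenform for every similitude Hecke operator and $\mathbf{g}$ for every $S_{\nu}$, one obtains $T\,\mathbf{F_g}=\lambda_{T}(p)\,\mathbf{F_g}$ with $\lambda_{T}(p)$ explicit in terms of the Satake data of $\mathbf{F}$ and $\mathbf{g}$; in particular $\mathbf{F_g}$, when nonzero, is a Hecke eigenform. Dualizing these identities over all $T$ and invoking the rigidity of the first step, the Satake parameters of $\mathbf{F_g}$ at $p$ in $\mathrm{SO}_{4n+2r+1}(\cc)$ are forced to be those of $\mathbf{g}$ together with exactly the sub-progression of $\mathbf{F}$'s parameters corresponding to the $2n$ factors $\prod_{i=1}^{2n}L(s+k+n-i,\mathbf{f})$ — the remaining $2r$ outermost pairs and the central unit of $\mathbf{F}$'s parameters being replaced by the parameters of $\mathbf{g}$. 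Translating back into $L$-functions yields precisely $L(s,\mathbf{F_g},\mathrm{st})=L(s,\mathbf{g},\mathrm{st})\prod_{i=1}^{2n}L(s+k+n-i,\mathbf{f})$.

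The step I expect to be the main obstacle is the Hecke-equivariance identity. The embedding $\lih_{2n+r}\times\lih_{r}\hookrightarrow\lih_{2n+2r}$ is not induced by a Levi subgroup of $\spp_{4n+4r}$, so there is no formal transfer of Hecke algebras, and one must instead carry out an honest double-coset (equivalently Fourier--Jacobi) computation; the explicit shape of the Ikeda lift enters essentially here, both to see that the linear combination closes up into genus-$(2n+2r)$ and genus-$r$ Hecke operators and to prevent $T_{Z}$ from mixing $\mathbf{F_g}$ with Miyawaki lifts of other eigenforms. Isolating and discarding the non-cuspidal part of the pullback in the $W$-variable is a secondary but necessary point. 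A cleaner, if less self-contained, route runs through Arthur's classification: $\mathbf{F}$ has parameter $(\phi_{\mathbf{f}}\boxtimes\nu_{2n+2r})\oplus(\mathbf{1}\boxtimes\nu_{1})$ for $\mathrm{SO}_{4n+4r+1}(\cc)$, where $\nu_{m}$ denotes the $m$-dimensional representation of $\mathrm{SL}_{2}$ and $\phi_{\mathbf{f}}$ the parameter of $\mathbf{f}$, and a seesaw/theta-type identity for the restrict-and-pair construction yields the parameter $(\phi_{\mathbf{f}}\boxtimes\nu_{2n})\oplus\phi_{\mathbf{g}}$ for $\mathrm{SO}_{4n+2r+1}(\cc)$; a check of infinitesimal characters confirms consistency with weight $k+n+r$ and genus $2n+r$, whereupon the $L$-function factorization is immediate.
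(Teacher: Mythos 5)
Note first that this paper does not prove this theorem: it quotes it verbatim from Ikeda's 2006 paper \cite{MR2219248}, so there is no internal proof to compare against. Evaluating your sketch on its own terms, the architecture is plausible but the load-bearing step is stated rather than proved, and it cannot be finessed away. The Hecke-equivariance identity you posit --- that for each genus-$(2n+r)$ Hecke operator $T$ one can write $T_{Z}\bigl(\mathbf{F}(\diag(Z,W))\bigr)$ as a finite combination of $(U_{\nu}\mathbf{F})(\diag(Z,W))$ hit by genus-$r$ operators $S_{\nu}$ in $W$ --- is essentially the entire content of the theorem, and you explicitly defer it. The diagonal embedding $\lih_{2n+r}\times\lih_{r}\hookrightarrow\lih_{2n+2r}$ is not of Levi type, so there is no formal Satake transfer; one must either carry out the double-coset/Fourier--Jacobi computation you gesture at, or exploit the very specific form of the Ikeda lift's Fourier coefficients (which is what Ikeda actually does). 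Without that, ``dualizing these identities and invoking rigidity'' does not determine the Satake parameters of $\mathbf{F_{g}}$; rigidity of $\mathbf{F}$'s own parameters says nothing a priori about which of them survive the restrict-and-pair operation and which are replaced by the parameters of $\mathbf{g}$, and the dichotomy ``outer $2r$ pairs and the central unit replaced, inner $2n$ pairs kept'' is precisely what needs proof.

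The Arthur-parameter shortcut has the same gap: one can guess that $\mathbf{F_{g}}$ has parameter $(\phi_{\mathbf{f}}\boxtimes\nu_{2n})\oplus\phi_{\mathbf{g}}$ by matching infinitesimal characters, but there is no general theorem that says ``restrict to a block diagonal and pair against a cusp form'' implements a prescribed surgery on Arthur parameters. Establishing that the cuspidal representation generated by $\mathbf{F_{g}}$ (when nonzero) actually has that parameter --- and not, say, a different constituent of a packet with the same infinitesimal character, or a mixture --- is exactly where the work lives. In both routes, you would also need to verify that $T_{Z}$ does not mix $\mathbf{F_{g}}$ with Miyawaki lifts of other eigenforms $\mathbf{g}'$; you flag this, but again it is a substantive point, not a footnote. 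So the proposal is a roadmap rather than a proof: it correctly names the destination and the hard step, but leaves the hard step unexecuted.
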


However, the assumption in this theorem is an obstacle to applying
Miyawaki lifts. There are several results on the nonvanishing of $\mathbf{F_{g}}$
(cf. \cite{MR4081121}, \cite{MR4034599}), but we have not yet been
able to determine when $\mathbf{F_{g}}\not\equiv0$.

The most straightforward way to know when $\mathbf{F_{g}}\not\equiv0$
is to consider the norm of $\mathbf{F_{g}}$. In \cite{MR2219248},
Ikeda studied some examples and gave a conjectural formula for the
norms of Miyawaki lifts.
\begin{conjecture}[{\cite[Conjecture 5.1]{MR2219248}}]
\label{conj:Assume-that-.}We have
\begin{equation}
\frac{\langle\mathbf{f},\mathbf{f}\rangle\langle\mathbf{\mathbf{F_{g}}},\mathbf{\mathbf{F_{g}}}\rangle}{\langle\mathbf{h},\mathbf{h}\rangle\langle\mathbf{g},\mathbf{g}\rangle}=2^{-(a+c)}\Lambda(k+n,\mathrm{st}(g)\boxtimes f)\prod_{i=1}^{n}\Lambda(2i-1,f,\mathrm{Ad})\xi(2i)\Gamma_{\rr}(2i-1)\Gamma_{\rr}(2i+1),\label{eq:ikeconj}
\end{equation}
where $\mathbf{h}\in S_{k+1/2}^{+}(\Gamma_{0}(4))$ is the Hecke eigenform
corresponds to $\mathbf{f}$ by the Shimura correspondence, $\Lambda(s,-)$
means the completion of $L(s,-)$, $\xi(s)$ is the completed zeta
function, and
\[
a=r^{2}+2k(n+r)+2rn+2n+r-k-2,\ c=\begin{cases}
0 & n>0,\\
1 & n=0.
\end{cases}
\]
\end{conjecture}

We note that Ichino solved this conjecture for $(r,n)=(2,0)$ in \cite{MR2198222}.
\begin{rem}
The original version of Conjecture \ref{conj:Assume-that-.} assumes
that $n<k$ based on Deligne\textquoteright s conjecture (see \cite[Remark 5.2]{MR2219248}).
In addition, it also gives a formula for $r=0$. The formula for $r=0$
was shown by Katsurada and Kawamura without assuming $n<k$ (see Theorem
\ref{thm:-We-havewhere}).
\end{rem}

The aim of this paper is to rewrite the above conjecture using L-values
and local integrals, like the refined Gan-Gross-Prasad conjecture
\cite{MR2585578}. Let $\sigma=\otimes_{v}\sigma_{v},\ \Sigma=\otimes_{v}\Sigma_{v},\ \pi=\otimes_{v}\pi_{v}$
be the cusp representations of $\gll_{2}(\aqq)$, $\spp_{4n+4r}(\aqq)$,
$\spp_{2r}(\aqq)$ correspond to $\mathbf{f},\ \mathbf{F},\ \mathbf{g},$
respectively. Let $\varphi=\otimes_{v}\varphi_{v}\in\Sigma$ and $\psi=\otimes_{v}\psi_{v}\in\pi$
be the adelizations of $\mathbf{F}$ and $\mathbf{g}$, respectively.
Put 
\[
\varphi_{\psi}(h)=(\varphi|_{\spp_{4n+2r}\times\spp_{2r}(\aqq)}(h,-),\psi)
\]
for $h\in\spp_{4n+2r}(\aqq)$, where $\spp_{4n+2r}\times\spp_{2r}$
is identified with 
\[
\left\{ \begin{pmatrix}A &  & B\\
 & E &  & F\\
C &  & D\\
 & G &  & H
\end{pmatrix}\ \middle|\ \begin{pmatrix}A & B\\
C & D
\end{pmatrix}\in\spp_{4n+2r},\ \begin{pmatrix}E & F\\
G & H
\end{pmatrix}\in\spp_{2r}\right\} \subset\spp_{4n+4r}
\]
 and we denote by $(-,-)$ the $L^{2}$-inner product with respect
to the Tamagawa measure. For each place $v$ of $\qq,$ put 
\[
\Phi_{v}(h_{v})=(\Sigma_{v}(h_{v})\varphi_{v},\varphi_{v})/(\varphi_{v},\varphi_{v}),\ \Psi_{v}(g_{v})=(\pi_{v}(g_{v})\psi_{v},\psi_{v})/(\psi_{v},\psi_{v})
\]
for $h_{v}\in\spp_{4n+4r}(\qq_{v}),\ g_{v}\in\spp_{2r}(\qq_{v})$
and define $I(\Phi_{v},\Psi_{v})$ by

\[
I(\Phi_{v},\Psi_{v})=\int_{\spp_{2r}(\qq_{v})}\Phi_{v}(g_{v})\overline{\Psi_{v}(g_{v})}dg_{v}
\]
as long as it converges, where $(-,-)$ denotes a fixed inner product
and $dg_{v}$ is the Haar measure on $\spp_{2r}(\qq_{v})$ such that
\[
\mathrm{vol}(\spp_{2r}(\zz)\backslash\spp_{2r}(\rr),dg_{\infty})=\zeta(2)\zeta(4)\dots\zeta(2r),\ \mathrm{vol}(\spp_{2r}(\zz_{p}),dg_{p})=\zeta_{p}(2)^{-1}\zeta_{p}(4)^{-1}\dots\zeta_{p}(2r)^{-1}\ (p<\infty).
\]
Note that $\prod_{v}dg_{v}$ is the Tamagawa measure on $\spp_{2r}(\aqq)$.
We will see later that $I(\Phi_{v},\Psi_{v})$ is well-defined if
$\pi$ is tempered (Corollary \ref{cor:If--is}).

Then, the rewritten conjecture is the following. 
\begin{conjecture}
\label{conj:Assume-that-}Assume that $\pi_{p}$ is tempered for each
finite place $p$ of $\qq$. Then, for any finite set $S$ of places
of $\qq$ containing $\infty$, we have

\begin{equation}
\frac{(\varphi_{\psi},\varphi_{\psi})}{(\varphi,\varphi)(\psi,\mathbf{\psi})}=2^{-c}\mathcal{L}^{S}\prod_{v\in S}I(\Phi_{v},\Psi_{v}),\label{eq:conjggp}
\end{equation}
where $\mathcal{L}^{S}$ is an explicit partial Euler-product (see
$\S$\ref{sec:Main-theorem}).
\end{conjecture}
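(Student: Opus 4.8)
The plan is to deduce \eqref{eq:conjggp} from Ikeda's Conjecture \ref{conjik}: I would translate the classical Petersson norms into the Tamagawa-normalized inner products $(-,-)$, replace the norm of the Ikeda lift $\mathbf F$ by known data, and then recognize the resulting expression as $C\,\mathcal{L}^S\prod_{v\in S}I(\Phi_v,\Psi_v)$ after an unramified and an archimedean local computation. Every step will be an identity of explicit quantities, so this also establishes that \eqref{eq:ikeconj} (for $r>0$) and \eqref{eq:conjggp} are equivalent.

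First I would fix the dictionary between $\langle-,-\rangle$ and $(-,-)$. For each of $\mathbf g$ on $\spp_{2r}$, its Miyawaki lift $\mathbf{F_g}$ on $\spp_{4n+2r}$, and $\mathbf F$ on $\spp_{4n+4r}$, adelization carries the Petersson norm to the $L^{2}$-norm for the Tamagawa measure up to an explicit positive constant built from $\mathrm{vol}(\spp_{2m}(\zz)\backslash\lih_m)$, i.e.\ from $\Gamma_{\rr}$-values, powers of $\pi$ and of $2$, and the products $\zeta(2)\cdots\zeta(2m)$ used to normalize the measure above. Dividing, $(\varphi_\psi,\varphi_\psi)/(\psi,\psi)$ becomes $\langle\mathbf{F_g},\mathbf{F_g}\rangle/\langle\mathbf g,\mathbf g\rangle$ times such a constant, and $(\varphi,\varphi)$ becomes $\langle\mathbf F,\mathbf F\rangle$ times another. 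I would then eliminate $\langle\mathbf F,\mathbf F\rangle$ using the known period formula for the Ikeda lift, together with the Shimura/Kohnen--Zagier relation between $\langle\mathbf h,\mathbf h\rangle$ and $\langle\mathbf f,\mathbf f\rangle$; this converts the left-hand side of \eqref{eq:conjggp} into an explicit elementary constant times $\langle\mathbf f,\mathbf f\rangle\langle\mathbf{F_g},\mathbf{F_g}\rangle/(\langle\mathbf h,\mathbf h\rangle\langle\mathbf g,\mathbf g\rangle)$ and a finite product of completed adjoint $L$-values and $\xi$-values, which by Conjecture \ref{conjik} equals an explicit constant times $\Lambda(k+n,\mathrm{st}(g)\boxtimes f)$ and further $\Lambda(\cdot,f,\mathrm{Ad})$-, $\xi$- and $\Gamma_{\rr}$-factors.

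Next I would treat the right-hand side. I would first check that $I(\Phi_v,\Psi_v)$ converges: temperedness of $\pi$ controls $\Psi_v$, and $n<k$ is exactly the condition making the restricted matrix coefficient $\Phi_v|_{\spp_{2r}(\qq_v)}$ of the non-tempered (CAP) representation $\Sigma_v$ decay fast enough against it. At a finite place $p$ where $\sigma_p$, $\pi_p$ and hence $\Sigma_p$ are unramified, $\Phi_p$ and $\Psi_p$ are zonal spherical functions whose Satake parameters are given by the Ikeda-lift recipe applied to $\mathbf f$ and by those of $\mathbf g$, respectively; evaluating $\int_{\spp_{2r}(\qq_p)}\Phi_p\overline{\Psi_p}\,dg_p$ by the Macdonald--Casselman--Shalika formalism, as in the unramified computations underlying Ichino--Ikeda type period formulas, should give $I(\Phi_p,\Psi_p)=\mathcal{L}_p$, the local factor of $\mathcal{L}^S$ at $p$. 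Hence $\mathcal{L}^S\prod_{v\in S}I(\Phi_v,\Psi_v)$ is independent of the choice of $S$, and it suffices to take $S=\{\infty\}$: the right-hand side of \eqref{eq:conjggp} equals $C$ times $\prod_{p<\infty}\mathcal{L}_p$ times the single archimedean integral $I(\Phi_\infty,\Psi_\infty)$.

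The remaining task, which I expect to be the main obstacle, is to compute $I(\Phi_\infty,\Psi_\infty)$, where $\Phi_\infty$ and $\Psi_\infty$ are normalized matrix coefficients of the scalar weight $k+n+r$ holomorphic (discrete series) representations of $\spp_{4n+4r}(\rr)$ and $\spp_{2r}(\rr)$. Realizing these matrix coefficients as powers of the automorphy factor on the Siegel upper half spaces, I would reduce the integral over $\spp_{2r}(\rr)$ to a Siegel-domain integral evaluable by a symplectic beta-integral identity --- the analogue of the archimedean computation Ichino carried out in the case $r=2$, $n=0$ --- which yields an explicit product of $\Gamma_{\rr}$-values and a power of $2$. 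Matching the combined output of these three steps against the right-hand side of \eqref{eq:ikeconj} then pins down the constant $C$ and the exponent $a$ in each of its three cases, completing the identification of \eqref{eq:ikeconj} with \eqref{eq:conjggp}. Besides the archimedean integral, the parts most likely to require care are the regularization of the non-tempered matrix coefficient $\Phi_p$ in the unramified computation and the bookkeeping of the $2$-powers across the case distinctions $r,n>0$, $r=0$ and $n=0$.
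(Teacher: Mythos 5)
Your proposal reproduces the paper's high-level strategy (convert Petersson to Tamagawa inner products, divide out the Ikeda-lift norm, match the remainder against local integrals), and the archimedean part of your plan is in the right spirit, but two of your ingredients are off and a third is a genuine gap. First, you would not use the Shimura/Kohnen--Zagier relation: that expresses $\langle\mathbf h,\mathbf h\rangle/\langle\mathbf f,\mathbf f\rangle$ through a quadratic-twist $L$-value of $f$ that appears nowhere in \eqref{eq:ikeconj}. The tool the paper uses (and that actually closes the books) is the Katsurada--Kawamura theorem, i.e.\ the proven $r=0$ case of Conjecture~\ref{conjik}, which gives $\langle\mathbf f,\mathbf f\rangle\langle\mathbf F,\mathbf F\rangle/\langle\mathbf h,\mathbf h\rangle$ as an explicit product of $\Lambda(\cdot,f,\mathrm{Ad})$- and $\xi$-values; dividing \eqref{eq:ikeconj} by this single identity eliminates all three of $\langle\mathbf f,\mathbf f\rangle,\langle\mathbf h,\mathbf h\rangle,\langle\mathbf F,\mathbf F\rangle$ at once. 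Second, your claim that $n<k$ is ``exactly the condition'' for convergence is not right: the $p$-adic convergence needs only temperedness of $\pi_p$ (the bound comes out as $|a_1\cdots a_r|^{n+1/2-2\ep}$, integrable for every $n\ge0$), and at $\infty$ the lowest weight $k+n+r$ exceeds $r$ automatically. The hypothesis $n<k$ is simply inherited from Ikeda's conjecture and plays no role in the local analysis.

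The serious gap is your unramified $p$-adic step. You propose to evaluate $\int_{\spp_{2r}(\qq_p)}\Phi_p\overline{\Psi_p}\,dg_p$ ``by the Macdonald--Casselman--Shalika formalism,'' but $\Phi_p$ is not a spherical function on $\spp_{2r}(\qq_p)$; it is the \emph{restriction} to $\spp_{2r}(\qq_p)$ of the zonal spherical function of the far larger, non-tempered CAP representation $\Sigma_p$ of $\spp_{4n+4r}(\qq_p)$. Expanding Macdonald's formula for $\spp_{4n+4r}$ on a sub-torus of $\spp_{2r}$ and integrating against Macdonald's formula for $\spp_{2r}$ is not a computation that is known to close, and there is no mechanism in your plan that would make the resulting Weyl-group sum collapse to the ratio of $L$-values in $\mathcal L_p$. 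The paper avoids this head-on by a structurally different route: via Ginzburg--Soudry's local double descent, $\Phi_p$ is identified (after Proposition~\ref{prop:Assume-that-}, an inductivity of the function $F_{s,m}$) with $(\Delta_{\spp_{4n+4r}}/\Delta_{\spp_{4n+2r}})F_{n,r}$, a degenerate Whittaker--Fourier coefficient of an unramified section, so that $I(\Phi_p,\Psi_p)$ becomes the unramified twisted-doubling zeta integral whose value is exactly $\mathcal L_p$. You also invert the relative difficulty of the two local problems: the archimedean integral, which you flag as ``the main obstacle,'' is in fact the short step, because the explicit matrix-coefficient formula for holomorphic lowest-weight modules gives $\Phi_\infty|_{\spp_{2r}(\rr)}=\Psi_\infty$ on the nose, reducing $I(\Phi_\infty,\Psi_\infty)$ to the squared $L^2$-norm of a single matrix coefficient, evaluated by the Godement-type beta integral of Lemma~\ref{lem:Let--be}(2). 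It is the $p$-adic step, which you treat as routine, that carries nearly all of the technical weight of Section~\ref{sec:Proof-of-theorem}.
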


This paper is organized as follows. In $\S$\ref{sec:Deformation-of-the},
we deform (\ref{eq:ikeconj}) into an equation between the left hand
side of (\ref{eq:conjggp}) and some L-values. In $\S$\ref{sec:Main-theorem}
we state the main theorem of this paper. The main theorem gives the
value of $I(\Phi_{v},\Psi_{v})$ and establishes the equivalence between
Conjecture \ref{conj:Assume-that-.} and Conjecture \ref{conj:Assume-that-}.
In $\S$\ref{sec:Proof-of-theorem} we prove the main theorem. The
proof is quite different whether $v=\infty$ or $v=p<\infty$. In
the former case, we use a single fact about matrix coefficients of
lowest weight modules of scalar type. In the latter case, we use (local)
double descent of Ginzburg-Soudry \cite{zbMATH07485546}. I believe
that the latter method is essential to prove Conjecture \ref{conj:Assume-that-},
but I have not found a way to apply it to the former case.

\section*{Notations}

For any ring $R$, define $\spp_{2d}(R)$ by

\[
\spp_{2d}(R)=\left\{ g\in\gll_{2d}(R)\ \middle|\ g\begin{pmatrix} & -1_{d}\\
1_{d}
\end{pmatrix}{}^{t}g=\begin{pmatrix} & -1_{d}\\
1_{d}
\end{pmatrix}\right\} .
\]
For $d=a+b$, we identify $\spp_{2a}\times\spp_{2b}$ with the subgroup
\[
\left\{ \begin{pmatrix}A &  & B\\
 & E &  & F\\
C &  & D\\
 & G &  & H
\end{pmatrix}\ \middle|\ \begin{pmatrix}A & B\\
C & D
\end{pmatrix}\in\spp_{2a},\begin{pmatrix}E & F\\
G & H
\end{pmatrix}\in\spp_{2b}\right\} 
\]
of $\spp_{2d}$. We often denote the first factor of it by $\spp_{2a}$
and the second one by $\spp_{2b}$. (Strictly speaking, this notation
is useless when $a=b$, but this will not be a problem in this paper.) 

We denote by $S_{w}(\spp_{2d}(\zz))$ the space of Siegel cusp forms
of degree $d$, weight $w$, and level one. We also denote by $\lih_{d}$
the Siegel upper half space of degree $d$. For any $S_{w}(\spp_{2d}(\zz))$,
$\langle-,-\rangle$ means the Petersson inner product on $S_{w}(\spp_{2d}(\zz))$.

Let $\tau$ be a unitary representation of a group $G$. Then, we
denote a fixed $G$-invariant inner product of $\tau$ by $(-,-)_{\tau}$
or simply $(-,-)$. Moreover, if $G=\spp_{2d}(\aqq)$ for some $d\in\zz_{>0}$
and $\tau$ is a subrepresentation of the space of square-integrable
automorphic forms on $G,$ then we fix $(-,-)$ as the $L^{2}$-inner
product with respect to the Tamagawa measure.

\section{Deformation of the formula\label{sec:Deformation-of-the}}

In the rest of this paper, we continue the notations of the previous
section. Namely,
\begin{itemize}
\item $k\in\zz_{>0},$ $n\in\zz_{\geq0}$, and $r\in\zz_{>0}$ such that
$k+n+r\in2\zz$,
\item $\mathbf{f}\in S_{2k}(\sll_{2}(\zz))$ is a normalized Hecke eigenform
and $\tau=\otimes_{v}\tau_{v}$ is the cusp representation of $\gll_{2}(\aqq)$
corresponds to $\mathbf{f}$,
\item $\mathbf{F}$ is the Ikeda lift of $\mathbf{f}$ in $S_{k+n+r}(\spp_{4n+4r}(\zz))$
and $\varphi=\otimes_{v}\varphi_{v}$ is the adelization of $\mathbf{F}$
and $\Sigma=\otimes_{v}\Sigma_{v}$ is the cusp representation of
$\spp_{4n+4r}(\aqq)$ generated by $\varphi$,
\item $\mathbf{g}\in S_{k+n+r}(\spp_{2r}(\zz))$ is a Hecke eigenform and
$\psi=\otimes_{v}\psi_{v}$ is the adelization of $\mathbf{g}$ and
$\pi=\otimes_{v}\pi_{v}$ is the cusp representation of $\spp_{2r}(\aqq)$
generated by $\psi$,
\item $\Phi_{v}$ and $\Psi_{v}$ are defied by
\[
\Phi_{v}(h_{v})=(\Sigma_{v}(h_{v})\varphi_{v},\varphi_{v})/(\varphi_{v},\varphi_{v}),\ \Psi_{v}(g_{v})=(\pi_{v}(g_{v})\psi_{v},\psi_{v})/(\psi_{v},\psi_{v})
\]
for $h_{v}\in\spp_{4n+4r}(\qq_{v}),\ g_{v}\in\spp_{2r}(\qq_{v})$
and $I(\Phi_{v},\Psi_{v})$ is defined by
\[
I(\Phi_{v},\Psi_{v})=\int_{\spp_{2r}(\qq_{v})}\Phi_{v}(g_{v})\overline{\Psi_{v}(g_{v})}dg_{v},
\]
where $dg_{v}$ is the Haar measure on $\spp_{2r}(\qq_{v})$ such
that 
\[
\mathrm{vol}(\spp_{2r}(\zz)\backslash\spp_{2r}(\rr),dg_{\infty})=\zeta(2)\zeta(4)\dots\zeta(2r),\ \mathrm{vol}(\spp_{2r}(\zz_{p}),dg_{p})=\zeta_{p}(2)^{-1}\zeta_{p}(4)^{-1}\dots\zeta_{p}(2r)^{-1}\ (p<\infty);
\]
note that $\prod_{v}dg_{v}$ is the Tamagawa measure on $\spp_{2r}(\aqq)$.
\end{itemize}
The aim of this section is to rewrite (\ref{eq:ikeconj}) into an
equivalent equation between the left hand side of (\ref{eq:conjggp})
and some L-values.

First, we rewrite (\ref{eq:ikeconj}) into an equation which is invariant
under scalar multiplication of $\mathbf{F}$ and $\mathbf{g}$. We
recall an important result of Katsurada and Kawamura.
\begin{thm}[{\cite[Theorem 2.3]{MR3384518}}]
\label{thm:-We-havewhere} We have
\begin{equation}
\frac{\langle\mathbf{f},\mathbf{f}\rangle\langle\mathbf{\mathbf{F}},\mathbf{\mathbf{F}}\rangle}{\langle\mathbf{h},\mathbf{h}\rangle}=2^{-a_{0}}\Lambda(k+n+r,f)\prod_{i=1}^{n+r}\Lambda(2i-1,f,\mathrm{Ad})\xi(2i)\Gamma_{\rr}(2i-1)\Gamma_{\rr}(2i+1),\label{eq:katsukawa}
\end{equation}
where $a_{0}=2k(n+r)+2(n+r)-k-1.$
\end{thm}

Dividing by (\ref{eq:katsukawa}), we see that (\ref{eq:ikeconj})
is equivalent to

\begin{align}
\frac{\langle\mathbf{\mathbf{F_{g}}},\mathbf{\mathbf{F_{g}}}\rangle}{\langle\mathbf{F},\mathbf{F}\rangle\langle\mathbf{g},\mathbf{g}\rangle} & =2^{-(b+c)}\frac{\Lambda(k+n,\mathrm{st}(g)\boxtimes f)}{\Lambda(k+n+r,f)\prod_{i=1}^{r}\Lambda(2n+2i-1,f,\mathrm{Ad})\xi(2n+2i)}\label{eq:ikeconj-1}\\
 & \times\left[\prod_{i=1}^{r}\Gamma_{\rr}(2n+2i-1)\Gamma_{\rr}(2n+2i+1)\right]^{-1},\nonumber 
\end{align}
where $b=r^{2}-r+2rn-1.$

Next, we adelize the left hand side of (\ref{eq:ikeconj-1}). For
$d\in\zz_{>0}$, put 
\[
\Delta_{\spp_{2d}}=\xi(2)\xi(4)\dots\xi(2d).
\]
Then, by \cite{MR1501843}, the volume of the fundamental domain of
$\mathfrak{h}_{d}$ with respect to $\spp_{d}(\zz)$ is $2\Delta_{\spp_{2d}}$.
Thus, since the Tamagawa number of $\spp_{2d}$ is $1$, we have
\[
\langle\mathbf{\mathbf{F}},\mathbf{\mathbf{F}}\rangle=2\Delta_{\spp_{4n+4r}}(\varphi,\varphi),\ \langle\mathbf{\mathbf{g}},\mathbf{\mathbf{g}}\rangle=2\Delta_{\spp_{2r}}(\psi,\psi),\ \langle\mathbf{\mathbf{F_{g}}},\mathbf{\mathbf{F_{g}}}\rangle=8\Delta_{\spp_{2r}}^{2}\Delta_{\spp_{4n+2r}}(\varphi_{\psi},\varphi_{\psi}).
\]
Therefore, (\ref{eq:ikeconj-1}) is equivalent to

\begin{align}
\frac{(\varphi_{\psi},\varphi_{\psi})}{(\varphi,\varphi)(\psi,\mathbf{\psi})} & =2^{-c}\frac{\Delta_{\spp_{4n+4r}}}{\Delta_{\spp_{2r}}\Delta_{\spp_{4n+2r}}}\label{eq:rere}\\
 & \times\frac{\Lambda(k+n,\mathrm{st}(g)\boxtimes f)}{\Lambda(k+n+r,f)\prod_{i=1}^{r}\Lambda(2n+2i-1,f,\mathrm{Ad})\xi(2n+2i)}\nonumber \\
 & \times2^{-(r^{2}-r+2rn)}\left[\prod_{i=1}^{r}\Gamma_{\rr}(2n+2i-1)\Gamma_{\rr}(2n+2i+1)\right]^{-1}.\nonumber 
\end{align}

\section{Statement of the main theorem\label{sec:Main-theorem}}

In this section, we state the main theorem.

Put 

\begin{align*}
L'_{\infty}(s,\mathrm{st}(g)\boxtimes f) & =\Gamma_{\cc}(s)\prod_{i=1}^{r}\Gamma_{\cc}(s+n-k+i)\Gamma_{\cc}(s+n+k+i-1),\\
L'_{\infty}(s,f,\mathrm{Ad}) & =\Gamma_{\rr}(s+1)\Gamma_{\cc}(s+2k-1).
\end{align*}
Then, $\Lambda(s,\mathrm{st}(g)\boxtimes f)$ and $\Lambda(s,f,\mathrm{Ad})$
are defined by 

\begin{align*}
\Lambda(s,\mathrm{st}(g)\boxtimes f) & =L'_{\infty}(s,\mathrm{st}(g)\boxtimes f)\prod_{p<\infty}L(s-k+1/2,\pi_{p}\times\sigma_{p}),\\
\Lambda(s,f,\mathrm{Ad}) & =L'_{\infty}(s,f,\mathrm{Ad})\prod_{p<\infty}L(s,\sigma_{p},\mathrm{Ad})
\end{align*}
(see \cite{MR2219248}). Put 
\[
\mathcal{L}=\frac{\Delta_{\spp_{4n+4r}}}{\Delta_{\spp_{2r}}\Delta_{\spp_{4n+2r}}}\frac{L(n+1/2,\pi\times\sigma)}{L(n+r+1/2,\sigma)\prod_{i=1}^{r}L(2n+2i-1,\sigma,\mathrm{Ad})\xi(2n+2i)}
\]
 and 

\[
\mathcal{L}_{v}=\frac{(\Delta_{\spp_{4n+4r}})_{v}}{(\Delta_{\spp_{2r}})_{v}(\Delta_{\spp_{4n+2r}})_{v}}\frac{L(n+1/2,\pi_{v}\times\sigma_{v})}{L(n+r+1/2,\sigma_{v})\prod_{i=1}^{r}L(2n+2i-1,\sigma_{v},\mathrm{Ad})\zeta_{v}(2n+2i)}
\]
for any place $v$ of $\qq$, where $\zeta_{\infty}(s)=\Gamma_{\rr}(s)$
and $(\Delta_{\spp_{2d}})_{v}=\zeta_{v}(2)\zeta_{v}(4)\dots\zeta_{v}(2d)$.
For any finite set $S$ of places of $\qq$ containing $\infty$,
put

\[
\mathcal{L}^{S}=\mathcal{L}\prod_{v\in S}\mathcal{L}_{v}^{-1}.
\]
Moreover, put

\begin{align*}
\mathcal{L}'_{\infty} & =\frac{(\Delta_{\spp_{4n+4r}})_{\infty}}{(\Delta_{\spp_{2r}})_{\infty}(\Delta_{\spp_{4n+2r}})_{\infty}}\frac{L'_{\infty}(k+n,\mathrm{st}(g)\boxtimes f)}{L'_{\infty}(k+n+r,f)\prod_{i=1}^{r}L'_{\infty}(2n+2i-1,f,\mathrm{Ad})\zeta_{\infty}(2n+2i)}.
\end{align*}

\begin{rem}
We did not fix the definitions of $L(s,\pi_{v}\times\tau_{v})$ and
$L(s,\sigma_{v},\mathrm{Ad})$ explicitly. Therefore, although $\mathcal{L}_{p}$
is uniquely determined since $\pi_{p}$ and $\sigma_{p}$ are unramified,
$\mathcal{L}_{\infty}$ could have an ambiguity depending on them.
To avoid this ambiguity, $\mathcal{L}'_{\infty}$ was introduced.
\end{rem}

Then, the main theorem of this paper is the following.
\begin{thm}
\label{thm:Assume-that-}Let $v$ be a place of $\qq$. Assume that
$\pi_{p}$ is tempered if $v=p<\infty$. Then, we have

\[
I(\Phi_{v},\Psi_{v})=\begin{cases}
\mathcal{L}'_{\infty}\times2^{-(r^{2}-r+2rn)}\left[\prod_{i=1}^{r}\Gamma_{\rr}(2n+2i-1)\Gamma_{\rr}(2n+2i+1)\right]^{-1} & v=\infty,\\
\mathcal{L}_{p} & v=p<\infty.
\end{cases}
\]
\end{thm}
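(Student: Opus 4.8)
The plan is to treat the two cases $v=\infty$ and $v=p<\infty$ by completely different arguments, as advertised in the introduction, and in each case to compute the local doubling-type integral $I(\Phi_v,\Psi_v)$ explicitly. In both cases the key structural input is that $\Sigma$ is the Ikeda lift of $\mathbf{f}$, so the local component $\Sigma_v$ is not an arbitrary representation but one whose local parameter is governed by $\sigma_v$ together with the explicit Satake/archimedean data of an Ikeda lift; consequently the matrix coefficient $\Phi_v$ can be related, via a branching or see-saw identity for the pair $(\spp_{4n+4r},\spp_{4n+2r}\times\spp_{2r})$, to a product of matrix coefficients for $\sigma_v$ on the smaller group. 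Writing $\Phi_v$ in such a form, the integral over $\spp_{2r}(\qq_v)$ against $\overline{\Psi_v}$ becomes a local Rankin--Selberg / standard $L$-factor computation, which is where all the $L$-factors in $\mathcal L'_\infty$ and $\mathcal L_p$ come from.

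For $v=p<\infty$, I would use the local double descent of Ginzburg--Soudry \cite{zbMATH07485546}: since everything is unramified, $\Phi_p$ restricted to $\spp_{2r}(\qq_p)$ is (up to the known normalization) the value of the spherical vector matrix coefficient of the representation obtained from $\sigma_p$ by the descent construction, and pairing it against the normalized spherical matrix coefficient $\Psi_p$ of the tempered $\pi_p$ is a textbook unramified local integral. One evaluates it by the Casselman--Shalika / Gindikin--Karpelevich method, or by directly citing the unramified computation in the doubling method for $\spp\times\spp$, and the output is exactly the ratio of local $L$-factors and local volume factors that defines $\mathcal L_p$; the normalization of $dg_p$ (chosen so that $\mathrm{vol}(\spp_{2r}(\zz_p))=\prod_{i=1}^r\zeta_p(2i)^{-1}$) is precisely what converts the raw Euler factor into the stated $\mathcal L_p$. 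Temperedness of $\pi_p$ is used to guarantee absolute convergence of the integral and to justify the geometric-series manipulations.

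For $v=\infty$, the representations involved are lowest-weight (holomorphic) modules of $\spp_{4n+4r}(\rr)$ and $\spp_{2r}(\rr)$ of the indicated scalar weights, so I would compute $\Phi_\infty$ and $\Psi_\infty$ using explicit realizations of matrix coefficients of holomorphic discrete series / lowest weight modules — essentially integrals over the Siegel upper half space $\lih_d$ of powers of $\det(\mathrm{Im})$ against the Bergman-type kernel. The restriction $\Phi_\infty|_{\spp_{2r}(\rr)}$ then becomes an explicit function on $\lih_r$, and $I(\Phi_\infty,\Psi_\infty)$ reduces to a Godement--Siegel type integral $\int_{\lih_r}\det(Y)^{s}\cdots\,dX\,dY$ which evaluates to a product of Gamma functions; matching constants against the gamma-factor normalization $L'_\infty$ and the volume $\mathrm{vol}(\spp_{2r}(\zz)\backslash\spp_{2r}(\rr))=\Delta_{\spp_{2r}}$ yields the claimed archimedean formula with its power of $2$ and its product of $\Gamma_{\rr}$'s.

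The main obstacle, I expect, is the archimedean case: writing down the branching of the holomorphic matrix coefficient $\Phi_\infty$ along $\spp_{4n+2r}\times\spp_{2r}\subset\spp_{4n+4r}$ in a form explicit enough to integrate, and then correctly bookkeeping the tangle of $\Gamma_{\rr}$ and $\Gamma_{\cc}$ factors and powers of $2$ coming from Tamagawa/ Petersson normalizations (the $2$, $8$, and $\Delta_{\spp_{\bullet}}$ factors recorded in $\S\ref{sec:Deformation-of-the}$), so that the final constant is exactly $\mathcal L'_\infty\cdot 2^{-(r^2-r+2rn)}\bigl[\prod_{i=1}^r\Gamma_{\rr}(2n+2i-1)\Gamma_{\rr}(2n+2i+1)\bigr]^{-1}$ and not off by a spurious power of $2$ or $\pi$. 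The convergence at $v=\infty$ requires the hypothesis $n<k$ (ensuring the relevant exponents make the Siegel integral converge), and this is where that assumption enters; once convergence and the explicit integral formula are in hand, the identification with $\mathcal L'_\infty$ is a matter of carefully comparing the definitions of $L'_\infty(s,\mathrm{st}(g)\boxtimes f)$ and $L'_\infty(s,f,\mathrm{Ad})$ given above with the Gamma-function output.
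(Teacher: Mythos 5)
Your overall strategy coincides with the paper's: at $v=\infty$ one uses the explicit matrix coefficient of a scalar lowest-weight module together with its formal degree, and at $v=p$ one uses Ginzburg--Soudry local double descent together with the unramified computation of the (twisted) doubling integral. The archimedean half is fine, and in fact easier than you anticipate: the closed formula $(\tau(g)v_0,v_0)=2^{dw}\det(A+D+i(-B+C))^{-w}$ shows at a glance that $\Phi_\infty|_{\spp_{2r}(\rr)}=\Psi_\infty$, so $I(\Phi_\infty,\Psi_\infty)=\int|\Psi_\infty|^2\,dg_\infty$ is just the formal-degree integral, convergent whenever $k+n+r>r$, which always holds; the hypothesis $n<k$ plays no role in the archimedean computation (it enters only in the global conjectures, and Theorem \ref{thm:Assume-that-} does not assume it). After that it is indeed pure $\Gamma_\rr/\Gamma_\cc$ bookkeeping.

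The genuine gap is in the $p$-adic case. Double descent identifies $\Phi_p$ on the \emph{big} group $\spp_{4n+4r}(\qq_p)$ with $\Delta_{\spp_{4n+4r}}$ times the Fourier coefficient $F_{-(n+r),2n+2r}$ of the unramified section at the special point $s=-(n+r)$; it does \emph{not} tell you what the restriction of $\Phi_p$ to the middle block $\spp_{2r}(\qq_p)$ is. Your assertion that this restriction ``is the spherical matrix coefficient of the representation obtained from $\sigma_p$ by the descent'' is not correct: the restriction of a zonal spherical function to a subgroup is in general not a matrix coefficient of a single representation, and here it equals $(\Delta_{\spp_{4n+4r}}/\Delta_{\spp_{4n+2r}})F_{n,r}$, the Fourier coefficient at the \emph{shifted} parameter $s=n$, which is the kernel of a twisted doubling integral rather than a spherical function of $\pi_p$ or of any descent. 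Establishing the required identity $F_{-(n+r),2n+2r}(\diag(1_{2n+r},g,1_{2n+r}))=\Delta_{\spp_{4n+2r}}^{-1}F_{n,r}(g)$ is a hereditary/inductive property of these Fourier coefficients, proved in the paper by a long explicit unipotent-integration and root-exchange argument (Proposition \ref{prop:Assume-that-}); this is the technical heart of the proof and is entirely absent from your sketch. Only after that step does $I(\Phi_p,\Psi_p)$ become literally the unramified twisted doubling zeta integral $Z(f^r_s,\overline{\Psi_p})$ at $s=n$, whose known value gives $\mathcal L_p$; Casselman--Shalika/Gindikin--Karpelevich is not the relevant tool, and without the inductivity step the pairing you describe is not a ``textbook'' integral.
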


Combining Theorem \ref{thm:Assume-that-} with (\ref{eq:rere}), we
obtain the equivalence between Conjecture \ref{conj:Assume-that-.}
and Conjecture \ref{conj:Assume-that-}. We will prove this theorem
in the next section.

\section{Proof of Theorem \ref{thm:Assume-that-}\label{sec:Proof-of-theorem}}

We prove Theorem \ref{thm:Assume-that-} here. In this section, fix
a place $v$ of $\qq$.

\subsection{real case\label{subsec:real-case}}

Here we assume that $v=\infty$. 

First, we recall the classical measure $d_{C}g$ on $\spp_{2d}(\rr)$
(cf. \cite{MR4229179}). $d_{C}g$ is the Haar measure on $\spp_{2d}(\rr)$
satisfying

\[
\int_{\spp_{2d}(\rr)}f(g(i1_{d}))d_{C}g=\int_{\mathfrak{h}_{d}}f(X+iY)\det(Y)^{-(d+1)}dXdY
\]
for any function $f$ on $\mathfrak{h}_{d}$. For a fundamental domain
$\mathfrak{F}$ of $\mathfrak{h}_{d}$ with respect to $\spp_{2d}(\zz)$,
\[
\int_{\mathfrak{F}}\det(Y)^{-(d+1)}dXdY=2\Delta_{\spp_{2d}}
\]
holds (see \cite{MR1501843}).
\begin{lem}
\label{lem:vol}We have $\mathrm{vol}(\spp_{2d}(\zz)\backslash\spp_{2d}(\rr),d_{C}g)=\Delta_{\spp_{2d}}$.
\end{lem}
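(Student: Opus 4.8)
The plan is to compare the two Haar measures $dg_C$ and $dg_\infty$ on $\spp_{2d}(\rr)$ directly, since both are fixed by explicit normalizations. The measure $dg_\infty$ was normalized in the introduction so that $\mathrm{vol}(\spp_{2d}(\zz)\backslash\spp_{2d}(\rr),dg_\infty)=\zeta(2)\zeta(4)\cdots\zeta(2d)$, whereas $dg_C$ is the measure pushing forward to $\det(Y)^{-(d+1)}\,dX\,dY$ on $\mathfrak{h}_d$ under $g\mapsto g(i1_d)$. Since $\spp_{2d}(\rr)$ acts transitively on $\mathfrak{h}_d$ with stabilizer of $i1_d$ the maximal compact $K=\bu(d)$, any two invariant measures on $\spp_{2d}(\rr)$ that induce the same invariant measure on $\mathfrak{h}_d$ and assign the same total mass to $K$ must coincide; more simply, both $dg_C$ and $dg_\infty$ are Haar measures, hence differ by a positive scalar, and it suffices to pin down that scalar.

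The key computation is to evaluate $\mathrm{vol}(\spp_{2d}(\zz)\backslash\spp_{2d}(\rr),dg_C)$. Using the defining property of $dg_C$ with $f$ the characteristic function of a fundamental domain $\mathcal{F}$ for $\spp_{2d}(\zz)$ on $\mathfrak{h}_d$ — taking care that the action of $\spp_{2d}(\zz)$ on $\mathfrak{h}_d$ has generic stabilizer $\{\pm 1\}$, which is exactly why the classical volume $\int_{\mathcal{F}}\det(Y)^{-(d+1)}\,dX\,dY = 2\Delta_{\spp_{2d}}$ carries the factor $2$ — one gets
\[
\mathrm{vol}(\spp_{2d}(\zz)\backslash\spp_{2d}(\rr),dg_C) = \tfrac{1}{2}\int_{\mathcal{F}}\det(Y)^{-(d+1)}\,dX\,dY = \Delta_{\spp_{2d}} = \xi(2)\xi(4)\cdots\xi(2d).
\]
Here the factor $\tfrac12$ accounts for the two-to-one nature of $\spp_{2d}(\zz)\backslash\spp_{2d}(\rr)\to \spp_{2d}(\zz)\backslash\mathfrak{h}_d$ coming from $-1$ acting trivially on $\mathfrak{h}_d$ (equivalently, $\mathrm{vol}(K,dg_C)=1$ under this normalization, the standard fact for the classical measure). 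This gives the claimed value directly, without even needing to identify $dg_C$ with $dg_\infty$.

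The only genuinely delicate point is bookkeeping the normalization of $dg_C$ on the fibers of $\spp_{2d}(\rr)\to\mathfrak{h}_d$: the identity in the statement determines $dg_C$ only after one fixes the total mass of the compact fiber $K$, and one must check that the convention in \cite{MR4229179} (equivalently, the convention implicit in the displayed integral formula) assigns $K$ mass $1$ rather than, say, $\mathrm{vol}(K)$ with respect to some other normalization. Once that is fixed, everything else is the quoted Minkowski–Siegel volume computation $\int_{\mathcal{F}}\det(Y)^{-(d+1)}\,dX\,dY=2\Delta_{\spp_{2d}}$ from \cite{MR1501843}, and the $\zeta$-versus-$\xi$ discrepancy between the two displayed normalizations should be absorbed precisely by the archimedean factors $\Gamma_\rr$, confirming en route that $dg_C=dg_\infty$ at $v=\infty$. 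I expect the fiber-normalization check to be the main obstacle, but it is routine.
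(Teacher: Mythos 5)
Your argument is correct and is essentially the paper's: you consider the preimage $S$ of a fundamental domain $\mathcal{F}$ under $g\mapsto g(i1_d)$, note that the defining identity for $dg_C$ gives $\mathrm{vol}(S,dg_C)=\int_{\mathcal{F}}\det(Y)^{-(d+1)}\,dX\,dY=2\Delta_{\spp_{2d}}$, and divide by $2$ because each $\spp_{2d}(\zz)$-orbit on $\spp_{2d}(\rr)$ meets $S$ generically twice, owing to the stabilizer $\{\pm 1\}$ of the action on $\mathfrak{h}_d$. The ``delicate'' fiber-normalization point you flag is not actually delicate: the displayed identity already forces the pushforward of $dg_C$ along $g\mapsto g(i1_d)$ to be $\det(Y)^{-(d+1)}\,dX\,dY$, and since Haar measure on $\spp_{2d}(\rr)$ is unique up to scalar, that identity pins $dg_C$ down completely; the factor of $2$ enters entirely from the stabilizer, not from any remaining freedom in the fiber mass.

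One substantive correction: your closing remark that this ``confirms $dg_C=dg_\infty$'' is false. The two measures differ by the nontrivial constant $(\Delta_{\spp_{2d}})_\infty=\Gamma_{\rr}(2)\Gamma_{\rr}(4)\cdots\Gamma_{\rr}(2d)$, since $\mathrm{vol}(\spp_{2d}(\zz)\backslash\spp_{2d}(\rr),dg_C)=\xi(2)\cdots\xi(2d)$ while by construction $\mathrm{vol}(\spp_{2d}(\zz)\backslash\spp_{2d}(\rr),dg_\infty)=\zeta(2)\cdots\zeta(2d)$. This discrepancy is precisely why the factor $(\Delta_{\spp_{2r}})_\infty^{-1}$ appears when the paper converts the $dg_\infty$-integral for $I(\Phi_\infty,\Psi_\infty)$ into the $dg_C$-integral handled by Lemma~\ref{lem:Let--be}; conflating the two measures would silently drop that factor and break the subsequent computation. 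This does not affect the proof of the lemma itself, but it is exactly the kind of bookkeeping slip the lemma exists to prevent.
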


\begin{proof}
Put $\mathfrak{S}=\{g\in\spp_{2d}(\rr)\ |\ g(i1_{d})\in\mathfrak{F}\}$.
Then, 
\[
\int_{\mathfrak{F}}\det(Y)^{-(d+1)}dXdY=\int_{\spp_{2d}(\rr)}1_{\mathfrak{S}}(g)d_{C}g=\int_{\spp_{2d}(\zz)\backslash\spp_{2d}(\rr)}\sum_{\gamma\in\spp_{2d}(\zz)}1_{\mathfrak{S}}(\gamma g)d_{C}g.
\]
For any $g\in\spp_{2d}(\rr)$, $\#\{\gamma\in\spp_{2d}(\zz)\ |\ \gamma g\in\mathfrak{S}\}=2$
holds. Thus we have 
\[
\int_{\spp_{2d}(\zz)\backslash\spp_{2d}(\rr)}d_{C}g=2^{-1}\int_{\mathfrak{F}}\det(Y)^{-(d+1)}dXdY=\Delta_{\spp_{2d}}.
\]
\end{proof}
Next, we prepare a fact of matrix coefficients of lowest weight modules
of scalar type. For $w,\ d\in\zz_{>0},$ denote by $\tau_{w,d}$ the
lowest weight module of $\spp_{2d}(\rr)$ with minimal $\bu(d)$-type
$\det^{w}$.
\begin{lem}[{\cite[Proposition A.1]{MR3960116}, \cite[(148)]{MR4229179}}]
\label{lem:Let--be}Assume that $w>d$ and let $v_{0}\in\tau=\tau_{w,d}$
be a lowest weight vector such that $(v_{0},v_{0})_{\tau}=1.$ Then,
we have 
\[
(\tau(g)v_{0},v_{0})_{\tau}=\frac{2^{dw}}{\det(A+D+i(-B+C))^{w}},
\]
where $g=\begin{pmatrix}A & B\\
C & D
\end{pmatrix}\in\spp_{2d}(\rr)$. Moreover,
\begin{align*}
\int_{\spp_{2d}(\rr)}|(\tau(g)v_{0},v_{0})_{\tau}|^{2}d_{C}g & (=2^{d(d+3)/2}\pi^{d(d+1)/2}\prod_{1\leq i\leq d}\frac{\Gamma(w-(d+i)/2)}{\Gamma(w-(d-i)/2)}\\
 & =2^{d(d+3)/2}\prod_{1\leq i\leq d}\frac{\Gamma_{\rr}(2w-(d+i))}{\Gamma_{\rr}(2w-(d-i))})\\
 & =2^{d(d+3)/2}\prod_{1\leq i\leq d}\frac{\Gamma_{\rr}(2w-2d+i-1)}{\Gamma_{\rr}(2w-d+i)}
\end{align*}
holds.
\end{lem}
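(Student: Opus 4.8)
The plan is to establish the closed formula (1) by realizing $\tau$ explicitly, and then to deduce (2) from (1) by turning the left-hand side into an integral over $\mathfrak{h}_{d}$ that is evaluated by classical formulas.

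For (1), I would realize $\tau$ on the weighted Bergman space $\mathcal{H}_{w}$ of holomorphic functions on the bounded symmetric domain $\mathcal{D}_{d}=\{W\in\mathrm{Sym}_{d}(\cc)\mid 1_{d}-\overline{W}W>0\}$ (the image of $\mathfrak{h}_{d}$ under the Cayley transform $W\mapsto i(1_{d}+W)(1_{d}-W)^{-1}$), normalized so that the constant function $1$ has norm $1$; then $1$ spans the minimal $\bu(d)$-type, so we may take $v_{0}=1$. Its reproducing kernel is $K(W,W')=\det(1_{d}-W\overline{W'})^{-w}$ with $K(0,0)=1$, so by the reproducing property $(\tau(g)v_{0},v_{0})_{\tau}=(\tau(g)1)(0)$, which equals the value at the base point of the $(-w)$-th power of the automorphy factor attached to $g^{-1}$. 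Writing $g=\begin{pmatrix}A&B\\C&D\end{pmatrix}$, a direct Cayley-transform computation using the identities
\[
g(i1_{d})+i1_{d}=i\bigl(A+D+i(C-B)\bigr)(iC+D)^{-1},\qquad j(g,i1_{d})=\det(iC+D)
\]
(where $j(g,Z)=\det(CZ+D)$) shows this automorphy factor is a constant times $\det(A+D+i(C-B))$. The constant is pinned down at $g=1$: there $A=D=1_{d}$ and $B=C=0$, so $\det(A+D+i(C-B))=2^{d}$, and since $(\tau(1)v_{0},v_{0})_{\tau}=1$ the numerator must be $2^{dw}$. (For $w\le d$ one reaches the same formula by analytic continuation in $w$, or by the abstract uniqueness of the lowest weight matrix coefficient.)

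For (2), I would feed (1) into the left-hand side. The cocycle identity $\det\,\mathrm{Im}(g(Z))=\det\,\mathrm{Im}(Z)\,|j(g,Z)|^{-2}$ at $Z=i1_{d}$, combined with the identities above, gives $|\det(A+D+i(C-B))|^{2}=|\det(Z+i1_{d})|^{2}/\det\,\mathrm{Im}(Z)$ with $Z=g(i1_{d})=X+iY$; hence $|(\tau(g)v_{0},v_{0})_{\tau}|^{2}=2^{2dw}\det(Y)^{w}|\det(X+i(Y+1_{d}))|^{-2w}$ depends only on $g(i1_{d})$, and the defining property of $dg_{C}$ yields
\[
\int_{\spp_{2d}(\rr)}|(\tau(g)v_{0},v_{0})_{\tau}|^{2}\,dg_{C}=2^{2dw}\int_{\mathfrak{h}_{d}}\frac{\det(Y)^{w-d-1}}{|\det(X+i(Y+1_{d}))|^{2w}}\,dX\,dY.
\]
I would evaluate this by first doing the $X$-integral via $X\mapsto(Y+1_{d})^{1/2}X(Y+1_{d})^{1/2}$, which reduces it to $\det(Y+1_{d})^{(d+1)/2-2w}\,c_{d}(w)$ with $c_{d}(w)=\int_{\mathrm{Sym}_{d}(\rr)}\det(1_{d}+X^{2})^{-w}\,dX$ a classical Siegel--Hua integral (for $d=1$ it equals $\sqrt{\pi}\,\Gamma(w-1/2)/\Gamma(w)$), and then doing the remaining $Y$-integral by the matrix beta integral
\[
\int_{Y>0}\det(Y)^{\alpha-(d+1)/2}\det(1_{d}+Y)^{-\alpha-\beta}\,dY=\frac{\Gamma_{d}(\alpha)\Gamma_{d}(\beta)}{\Gamma_{d}(\alpha+\beta)},
\]
with $\Gamma_{d}$ the Siegel gamma function, $\alpha=w-(d+1)/2$, $\beta=w$ (which converges precisely for $w>d$). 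Multiplying the three factors and rewriting the resulting ratio of $\Gamma_{d}$-values yields the claimed value, the three displayed expressions being related to one another by Legendre's duplication formula and $\Gamma_{\rr}(s)=\pi^{-s/2}\Gamma(s/2)$. Alternatively, since $\tau$ is a discrete series for $w>d$, the integral equals $1/d_{\tau}$ by Schur orthogonality, and one could instead invoke the known formal degree of the scalar holomorphic discrete series with respect to $dg_{C}$.

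The hard part is not conceptual but the bookkeeping of the multiplicative constants: the powers of $2$ and $\pi$ entering through the Cayley transform in (1), through the (Fourier-transform) evaluation of $c_{d}(w)$, and through the two substitutions in (2) must combine to exactly $2^{d(d+3)/2}\pi^{d(d+1)/2}$ with no residual $w$-dependent power of $2$. Checking this cancellation---e.g.\ by the gamma-integral evaluation of $c_{d}(w)$, or by verifying the case $d=1$ and inducting on $d$---is where the actual work lies.
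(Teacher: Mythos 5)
The paper does not prove this lemma: it is stated as a quoted result, with the proof deferred to the cited references (\cite[Proposition A.1]{MR3960116} and \cite[(148)]{MR4229179}). There is therefore no in-paper argument to compare against, but your sketch is the standard route to these formulas and looks correct in outline. For part (1), identifying $v_0$ with the constant function $1=K_0$ in the weighted Bergman model on $\mathcal{D}_d$ and reading $(\tau(g)1)(0)$ off the reproducing kernel is exactly how one produces a closed formula in terms of an automorphy factor; your identity $g(i1_d)+i1_d=i\bigl(A+D+i(C-B)\bigr)(iC+D)^{-1}$ is correct and, together with the normalization $\det(A+D+i(C-B))=2^d$ at $g=1$, pins down the stated expression (one can also verify self-consistency by checking that $\overline{f(g)}=f(g^{-1})$ holds for symplectic $g$). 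For part (2), converting to an integral over $\mathfrak{h}_d$ via the cocycle identity $\det\mathrm{Im}(g(i1_d))=|\det(iC+D)|^{-2}$ and the defining property of $dg_C$, then evaluating by the affine substitution in $X$ and the matrix beta integral in $Y$ (converging exactly for $w>d$), is the classical formal-degree computation for the scalar holomorphic discrete series. As a sanity check on your constants, for $d=1$ your expression $2^{2w}\sqrt{\pi}\,\Gamma(w-1/2)\Gamma(w-1)/\Gamma(2w-1)$ reduces by Legendre duplication to $4\pi/(w-1)$, which agrees with the stated $2^{2}\pi\,\Gamma(w-1)/\Gamma(w)$. The only missing ingredient for a self-contained argument is a closed-form evaluation of the Hua integral $c_d(w)=\int_{\mathrm{Sym}_d(\mathbb{R})}\det(1_d+X^2)^{-w}\,dX$ in terms of $\Gamma_d$, after which the general-$d$ bookkeeping should fall out; your alternative via Schur orthogonality and the known formal degree is also a legitimate shortcut for $w>d$.
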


Then we can prove the first half of Theorem \ref{thm:Assume-that-}
as follows.
\begin{proof}[Proof of Theorem \ref{thm:Assume-that-} for $v=\infty$]
Let $v_{0}\in\tau_{k+n+r,2n+2r}$ be a nonzero lowest weight vector.
Then, the $\spp_{2r}(\rr)$-subrepresentation of $\tau_{k+n+r,2n+2r}$
generated by $v_{0}$ is equal to $\tau_{k+n+r,r}$ with lowest weight
vector $v_{0}$. Therefore, we have $\Phi_{\infty}|_{\spp_{2r}(\rr)}=\Psi_{\infty}.$

By Lemma \ref{lem:vol} and Lemma \ref{lem:Let--be}, we have

\begin{align}
I(\Phi_{\infty},\Psi_{\infty}) & =\int_{\spp_{2r}(\rr)}|\Psi_{\infty}(g)|^{2}dg_{\infty}\label{eq:fd}\\
 & =(\Delta_{\spp_{2r}})_{\infty}^{-1}2^{r(r+3)/2}\prod_{i=1}^{r}\frac{\Gamma_{\rr}(2k+2n+i-1)}{\Gamma_{\rr}(2k+2n+r+i)}.\nonumber 
\end{align}
On the other hand, 
\begin{align*}
 & L'_{\infty}(k+n+r,f)\prod_{i=1}^{r}L'_{\infty}(2n+2i-1,f,\mathrm{Ad})\zeta_{\infty}(2n+2i)\Gamma_{\rr}(2n+2i-1)\Gamma_{\rr}(2n+2i+1)\\
= & \Gamma_{\cc}(n+r+k)\prod_{i=1}^{r}\Gamma_{\cc}(2n+2k+2i-2)\Gamma_{\rr}(2n+2i-1)\Gamma_{\rr}(2n+2i)^{2}\Gamma_{\rr}(2n+2i+1)\\
= & \Gamma_{\cc}(n+r+k)\prod_{i=1}^{r}\Gamma_{\cc}(2n+2k+2i-2)\Gamma_{\cc}(2n+2i-1)\Gamma_{\cc}(2n+2i)\\
= & \Gamma_{\cc}(n+r+k)\prod_{i=1}^{r}\Gamma_{\cc}(2n+2k+2i-2)\Gamma_{\cc}(2n+i)\Gamma_{\cc}(2n+r+i).
\end{align*}
Therefore, we have 
\begin{align*}
 & \mathcal{L}'_{\infty}\times\left[\prod_{i=1}^{r}\Gamma_{\rr}(2n+2i-1)\Gamma_{\rr}(2n+2i+1)\right]^{-1}\\
= & (\Delta_{\spp_{2r}})_{\infty}^{-1}\frac{\Gamma_{\cc}(n+k)}{\Gamma_{\cc}(n+r+k)}\prod_{i=1}^{r}\frac{\Gamma_{\cc}(2n+i)\Gamma_{\cc}(2n+2k+i-1)\Gamma_{\rr}(4n+2r+2i)}{\Gamma_{\cc}(2n+2k+2i-2)\Gamma_{\cc}(2n+i)\Gamma_{\cc}(2n+r+i)}\\
= & (\Delta_{\spp_{2r}})_{\infty}^{-1}\frac{\Gamma_{\cc}(n+k)}{\Gamma_{\cc}(n+r+k)}\prod_{i=1}^{r}\frac{\Gamma_{\cc}(2n+2k+i-1)}{\Gamma_{\cc}(2n+2k+2i-2)}\frac{\Gamma_{\rr}(4n+2r+2i)}{\Gamma_{\cc}(2n+r+i)}\\
= & (\Delta_{\spp_{2r}})_{\infty}^{-1}\frac{\Gamma_{\cc}(n+k)}{\Gamma_{\cc}(n+r+k)}\prod_{i=1}^{r}\frac{\Gamma_{\rr}(2n+2k+i-1)}{\Gamma_{\rr}(2n+2k+r+i)}\frac{\Gamma_{\rr}(2n+2k+r+i)\Gamma_{\rr}(2n+2k+i)}{\Gamma_{\rr}(2n+2k+2i-2)\Gamma_{\rr}(2n+2k+2i-1)}2^{2n+r+i-1}
\end{align*}
by $\Gamma_{\rr}(2s)/\Gamma_{\cc}(s)=2^{s-1}.$ Then we have

\begin{align*}
 & \mathcal{L}'_{\infty}\times\left[\prod_{i=1}^{r}\Gamma_{\rr}(2n+2i-1)\Gamma_{\rr}(2n+2i+1)\right]^{-1}\\
= & I(\Phi_{\infty},\Psi_{\infty})2^{-r(r+3)/2}2^{2nr+r^{2}+r(r+1)/2-r}\frac{\Gamma_{\cc}(n+k)}{\Gamma_{\cc}(n+r+k)}\prod_{i=1}^{r}\frac{\Gamma_{\rr}(2n+2k+r+i)\Gamma_{\rr}(2n+2k+i)}{\Gamma_{\rr}(2n+2k+2i-2)\Gamma_{\rr}(2n+2k+2i-1)}\\
= & I(\Phi_{\infty},\Psi_{\infty})2^{2nr+r^{2}-2r}\frac{\Gamma_{\cc}(n+k)}{\Gamma_{\cc}(n+r+k)}\prod_{i=1}^{r}\frac{\Gamma_{\rr}(2n+2k+2i)\Gamma_{\rr}(2n+2k+2i-1)}{\Gamma_{\rr}(2n+2k+2i-2)\Gamma_{\rr}(2n+2k+2i-1)}\\
= & I(\Phi_{\infty},\Psi_{\infty})2^{2nr+r^{2}-2r}\frac{\Gamma_{\cc}(n+k)}{\Gamma_{\cc}(n+r+k)}\frac{\Gamma_{\rr}(2n+2k+2r)}{\Gamma_{\rr}(2n+2k)}\\
= & I(\Phi_{\infty},\Psi_{\infty})2^{2nr+r^{2}-2r}2^{1-(n+k)-1+(n+r+k)}=I(\Phi_{\infty},\Psi_{\infty})2^{2nr+r^{2}-r}
\end{align*}
by (\ref{eq:fd}) . Thus we obtain 
\[
I(\Phi_{\infty},\Psi_{\infty})=\mathcal{L}'_{\infty}\times2^{-(r^{2}-r+2rn)}\left[\prod_{i=1}^{r}\Gamma_{\rr}(2n+2i-1)\Gamma_{\rr}(2n+2i+1)\right]^{-1}.
\]
\end{proof}

\subsection{$p$-adic case}

Here we assume that $v=p<\infty$. We often drop $p$ subscript for
simplicity.

Before starting, we introduce some additional notations:
\begin{itemize}
\item From now on, we redefine $\spp_{2d}(R)$ by
\[
\spp_{2d}(R)=\left\{ g\in\gll_{2d}(R)\ \middle|\ g\begin{pmatrix} & -J_{d}\\
J_{d}
\end{pmatrix}{}^{t}g=\begin{pmatrix} & -J_{d}\\
J_{d}
\end{pmatrix}\right\} 
\]
for any ring $R$, where $J_{d}=\begin{pmatrix} &  & 1\\
 & \stodd\\
1
\end{pmatrix}\in\gll_{d}(\zz)$. Then, $\spp_{2a}\times\spp_{2b}$ is identified with the subgroup
\[
\left\{ \begin{pmatrix}A &  & B\\
 & g\\
C &  & D
\end{pmatrix}\ \middle|\ \begin{pmatrix}A & B\\
C & D
\end{pmatrix}\in\spp_{2a},g\in\spp_{2b}\right\} 
\]
of $\spp_{2d}$, where $a+b=d$. 
\item We put $F=\qq_{p},$ $\mathcal{O}=\zz_{p}$ and denote by $\abs$
the normalized absolute value of $F$.
\item For $X\in\mathrm{M}_{d,e}(F)$, put $X^{*}=J_{e}{}^{t}XJ_{d}$.
\item For $g\in\mathrm{\gll}_{d}(F)$, put $m(g)=\begin{pmatrix}g\\
 & (g^{*})^{-1}
\end{pmatrix}\in\spp_{2d}(F)$.
\item For any representation $\tau$ of $\gll_{d}(F)$ and character $\lambda$
of $F^{\times},$ put $\tau\lambda=\tau\otimes\lambda(\det)$ for
short.
\item Let $\tau_{1},\dots,\tau_{N}$ be representations of $\gll_{d_{1}}(F),\dots,\gll_{d_{N}}(F)$,
respectively. Then we denote the normalized parabolically induced
representation
\[
\ind_{P(F)}^{\gll_{d'}(F)}\tau_{1}\boxtimes\dots\boxtimes\tau_{N}
\]
by 
\[
\tau_{1}\times\dots\times\tau_{N},
\]
 where $d'=d_{1}+\dots+d_{N}$ and $P$ is the block upper triangular
parabolic subgroup of $\gll_{d'}$ whose Levi subgroup is $\diag(\gll_{d_{1}},\dots,\gll_{d_{N}}).$
Moreover, for a representation $\mu$ of $\spp_{2d_{0}}(F)$ such
that $d_{0}+d'=d,$ we denote the normalized parabolically induced
representation
\[
\ind_{Q(F)}^{\spp_{2d}(F)}\tau_{1}\boxtimes\dots\boxtimes\tau_{N}\boxtimes\mu
\]
by 
\[
\tau_{1}\times\dots\times\tau_{N}\rtimes\mu,
\]
where $Q$ is the block upper triangular parabolic subgroup of $\spp_{2d}$
whose Levi subgroup is naturally isomorphic to $\gll_{d_{1}}\times\dots\times\gll_{d_{N}}\times\spp_{2d_{0}}$.
\end{itemize}
First, we check that $I(\Phi,\Psi)$ is well-defined. By the asymptotic
behavior of matrix coefficients (see \cite[\S 4]{padicbook}) and
the geometric lemma (see \cite[\S 2]{zbMATH03639915}), we obtain
the following. 
\begin{lem}
\label{lem:Let--and}Let $\lambda_{1},\dots,\lambda_{d}$ be unitary
characters of $F^{\times}$ and $r_{1},\dots,r_{d}\in\rr$. Put 
\[
\tau=\lambda_{1}\abs^{r_{1}}\times\dots\times\lambda_{d}\abs^{r_{d}}\rtimes1_{\spp_{0}}.
\]
 Let $f$ be a matrix coefficient of $\tau$. Then, for any $\ep>0$,
there exist $C_{\ep}>0$ such that 
\[
|f(m(\diag(a_{1},\dots,a_{d}))|\leq C_{\ep}\delta_{d}(a_{1},\dots,a_{d})^{1/2}|a_{1}\dots a_{d}|^{-R-\ep}
\]
for any $a_{1},\dots,a_{d}\in F^{\times}$ such that $|a_{1}|\leq\dots\leq|a_{d}|\leq1,$
where 
\[
\delta_{d}(a_{1},\dots,a_{d})=|a_{1}|^{2d}|a_{2}|^{2d-2}\dots|a_{d}|^{2},\ R=\max\{|r_{i}|\ |\ i=1,\dots,d\}.
\]
\end{lem}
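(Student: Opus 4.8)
The plan is to recognise $\tau$ as a principal series of $\spp_{2d}(F)$, read off its exponents along the Borel subgroup from the geometric lemma, bound a matrix coefficient on the dominant cone by Casselman's asymptotic expansion, and finish with an elementary estimate. Since every block of the inducing datum is $\gll_1$ or $\spp_0$, the parabolic $Q$ used to form $\tau$ is the Borel subgroup $B$ of $\spp_{2d}$ with split maximal torus $T\cong(F^{\times})^{d}$, realised as $\{m(\diag(a_1,\dots,a_d))\}$, and
\[
\tau=\ind_{B(F)}^{\spp_{2d}(F)}\chi,\qquad\chi=\lambda_1\abs^{r_1}\otimes\cdots\otimes\lambda_d\abs^{r_d}.
\]
I will write a general matrix coefficient as $f(g)=\langle\tau(g)u,\tilde u\rangle$ with $u\in\tau$ and $\tilde u$ in the smooth contragredient $\tilde\tau$.

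\emph{Exponents and identification of the cone.} By the geometric lemma applied to $r_B\circ\ind_B^{\spp_{2d}}$ --- for which $W_T\backslash W/W_T=W$, the Weyl group of $\spp_{2d}$, acting on $T$ by signed permutations of the coordinates --- the semisimplification of the normalised Jacquet module $r_B(\tau)$ is $\bigoplus_{w\in W}w\chi$. Hence, by Casselman's asymptotic estimate for matrix coefficients of admissible representations, there are $C>0$ and $N\in\zz_{\geq0}$ such that
\[
|f(a)|\le C\sum_{w\in W}\delta_B(a)^{1/2}\,\bigl|(w\chi)(a)\bigr|\,\bigl(1+\textstyle\sum_i|\mathrm{val}(a_i)|\bigr)^{N}
\]
for all $a=m(\diag(a_1,\dots,a_d))$ in the chamber $A^{-}=\{a:|\alpha(a)|\le1$ for every simple root $\alpha\}$. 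The simple roots of $\spp_{2d}$ are $e_i-e_{i+1}$ $(1\le i\le d-1)$ and $2e_d$, where $e_i$ sends $m(\diag(a_1,\dots,a_d))$ to $a_i$; thus $A^{-}=\{|a_1|\le\cdots\le|a_d|\le1\}$, the region in the statement, and counting positive roots gives $\delta_B(a)=\prod_{k=1}^{d}|a_k|^{2(d-k+1)}=\delta_d(a_1,\dots,a_d)$.

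\emph{The elementary estimate.} On this cone each element of $W$ permutes and possibly inverts the $F^{\times}$-factors of $T$, and the $\lambda_i$ are unitary, so $|(w\chi)(a)|=\prod_i|a_i|^{\pm r_{\sigma(i)}}$ for some permutation $\sigma$ and signs; since $\pm r_{\sigma(i)}\ge-|r_{\sigma(i)}|\ge-R$ and $|a_i|\le1$, this is $\le\prod_i|a_i|^{-R}=|a_1\cdots a_d|^{-R}$. Because $\mathrm{val}(a_i)\ge0$ on the cone and $v^k\le C_{k,\ep}q^{\ep v}$ for $v\ge0$, the polynomial factor is $\le C_{\ep}\prod_i q^{\ep\,\mathrm{val}(a_i)}=C_{\ep}|a_1\cdots a_d|^{-\ep}$ for any $\ep>0$. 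Combined with $\delta_B=\delta_d$, this gives $|f(a)|\le C'_{\ep}\,\delta_d(a_1,\dots,a_d)^{1/2}|a_1\cdots a_d|^{-R-\ep}$, which is the assertion.

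\emph{Main obstacle.} The delicate point is not the bookkeeping but that Casselman's asymptotic expansion is an exact equality only for $a$ lying deep inside $A^{-}$, whereas $A^{-}$ also contains families (e.g.\ $a_1=\cdots=a_d$ with $|a_1|\to0$) that never reach the deep region and are not bounded modulo $T\cap K$. One therefore needs the uniform-majorant form of the estimate --- a bound on all of $A^{-}$ by the exponents times a polynomial correction --- which is exactly the shape of the result in \cite[\S 4]{padicbook}. Granting that, everything else is the direct computation above.
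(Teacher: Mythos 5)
Your proof is correct and is exactly the argument the paper has in mind: the paper offers no separate proof of this lemma but simply cites Casselman's asymptotics of matrix coefficients (\cite[\S 4]{padicbook}) and the geometric lemma (\cite[\S 2]{zbMATH03639915}), and your write-up is a correct unpacking of those two references — identification of the exponents of $r_B(\tau)$ as $\{w\chi\}_{w\in W}$, the uniform-majorant bound on the negative Weyl chamber $A^-=\{|a_1|\le\cdots\le|a_d|\le1\}$, the computation $\delta_B=\delta_d$ from the $d^2$ positive roots, and the absorption of the polynomial factor into $|a_1\cdots a_d|^{-\ep}$. Your remark at the end correctly isolates the only real subtlety (one needs the majorant valid on all of $A^-$, not just deep in the chamber), and that is precisely what the cited form of the estimate supplies.
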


For convenience, we denote $\sigma=\chi\times\chi^{-1},$ where $\chi$
is an unramified character of $F^{\times}$. We note that $\chi$
is unitary by Ramanujan conjecture. It is known that
\[
\Sigma=\chi(\det{}_{2n+2r})\rtimes1_{\spp_{0}}
\]
(see \cite{MR1884618}, note that $\chi(\det{}_{2n+2r})\rtimes1_{\spp_{0}}$
is irreducible by \cite{MR1194967}).
\begin{cor}
\label{cor:If--is}If $\pi$ is tempered, then $I(\Phi,\Psi)$ is
well-defined.
\end{cor}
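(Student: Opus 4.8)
The plan is to estimate the integrand $\Phi_{p}\overline{\Psi_{p}}$ on each Cartan double coset of $\spp_{2r}(F)$ and sum. Write $K'=\spp_{2r}(\mathcal O)$. By the Cartan decomposition every $g\in\spp_{2r}(F)$ lies in $K'\,m(\diag(a_1,\dots,a_r))\,K'$ for a unique tuple with $|a_1|\le\dots\le|a_r|\le1$, and $\vol\bigl(K'\,m(\diag(a_1,\dots,a_r))\,K'\bigr)$ is bounded by a polynomial in the $v(a_i)$ times $\prod_{i=1}^{r}|a_i|^{-2(r-i+1)}$, where $\prod_{i=1}^{r}|a_i|^{2(r-i+1)}=\delta_{\spp_{2r}}\bigl(m(\diag(a_1,\dots,a_r))\bigr)$ is the value of the modulus character of a Borel of $\spp_{2r}$. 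Since $\mathbf F$ and $\mathbf g$ have level one, $\varphi_{p}$ and $\psi_{p}$ are the spherical vectors, so $\Phi_{p}$ is bi-$\spp_{4n+4r}(\mathcal O)$-invariant and $\Psi_{p}$ is bi-$K'$-invariant; it therefore suffices to bound $\Phi_{p}$ and $\Psi_{p}$ on the torus elements $m(\diag(a_1,\dots,a_r))$.

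Since $\pi$ is tempered, the asymptotic behaviour of matrix coefficients (\cite{padicbook}) gives, for every $\ep>0$,
\[
\bigl|\Psi_{p}\bigl(m(\diag(a_1,\dots,a_r))\bigr)\bigr|\ \ll_{\ep}\ \delta_{\spp_{2r}}\bigl(m(\diag(a_1,\dots,a_r))\bigr)^{1/2}\,|a_1\cdots a_r|^{-\ep}\ =\ \prod_{i=1}^{r}|a_i|^{\,r-i+1-\ep}.
\]
For $\Phi_{p}$ I would use that $\chi$ is unitary (Ramanujan, as already noted) and that, by Ikeda's description, $\Sigma_{p}$ is the unramified constituent of $\chi(\det_{2n+2r})\rtimes1_{\spp_0}$; hence $\Sigma_{p}$ embeds into a principal series $\tau=\chi\abs^{r_1}\times\dots\times\chi\abs^{r_{2n+2r}}\rtimes1_{\spp_0}$ whose exponents run over $\{\pm\tfrac12,\pm\tfrac32,\dots,\pm\tfrac{2n+2r-1}{2}\}$, so $\Phi_{p}$ is, up to a positive scalar, a matrix coefficient of $\tau$ (a matrix coefficient of a subrepresentation being one of the ambient induced representation, via the contragredient) with $R:=\max_i|r_i|=n+r-\tfrac12$. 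Under the embedding $\spp_{2r}\hookrightarrow\spp_{4n+4r}$ the element $m(\diag(a_1,\dots,a_r))$ is carried to an $\spp_{4n+4r}(\mathcal O)$-conjugate of $m(\diag(a_1,\dots,a_r,1,\dots,1))$ with $2n+r$ trailing $1$'s, which lies in the positive cone of Lemma \ref{lem:Let--and}. Lemma \ref{lem:Let--and} with $d=2n+2r$ then yields
\[
\bigl|\Phi_{p}\bigl(m(\diag(a_1,\dots,a_r))\bigr)\bigr|\ \ll_{\ep}\ \delta_{2n+2r}(a_1,\dots,a_r,1,\dots,1)^{1/2}\,|a_1\cdots a_r|^{-R-\ep}\ =\ \prod_{i=1}^{r}|a_i|^{\,n+r-i+\frac32-\ep}.
\]

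Multiplying the three bounds, the $m(\diag(a_1,\dots,a_r))$-contribution to $I(\Phi_{p},\Psi_{p})$ is at most a polynomial in the $v(a_i)$ times
\[
\prod_{i=1}^{r}|a_i|^{\,-2(r-i+1)+(n+r-i+\frac32-\ep)+(r-i+1-\ep)}\ =\ \prod_{i=1}^{r}|a_i|^{\,n+\frac12-2\ep},
\]
the exponent being the same for all $i$. Hence $I(\Phi_{p},\Psi_{p})$ is dominated by $\sum_{m\ge0}(\text{polynomial in }m)\,p^{-(n+\frac12-2\ep)m}$, which converges once $\ep<\tfrac14$; this proves absolute convergence. (Equivalently, the $\Phi_{p}$-bound gives $\Phi_{p}|_{\spp_{2r}(F)}\in L^{2-\delta}(\spp_{2r}(F))$ for small $\delta>0$, temperedness gives $\Psi_{p}\in L^{2+\delta'}(\spp_{2r}(F))$ for all $\delta'>0$, and Hölder's inequality concludes.) The one delicate point is the exponent bookkeeping that makes the final exponent the manifestly positive $n+\tfrac12$: it rests on (a) the special shape of the Ikeda lift, which forces the exponents of the principal series containing $\Sigma_{p}$ to be $\pm\tfrac12,\dots,\pm\tfrac{2n+2r-1}{2}$, hence $R=n+r-\tfrac12$ and no larger; and (b) the way a Cartan cell of $\spp_{2r}$ sits, after a Weyl twist, inside one of $\spp_{4n+4r}$ with $2n+r$ extra $1$'s, producing the factor $\prod_i|a_i|^{2n+2r-i+1}$ that outweighs $\delta_{\spp_{2r}}^{-1}=\prod_i|a_i|^{-2(r-i+1)}$.
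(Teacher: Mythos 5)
Your proof is correct and follows the paper's own argument essentially line for line: you use the Cartan decomposition to reduce to an integral over the dominant torus, apply Lemma \ref{lem:Let--and} (the asymptotic bound derived from Casselman's theory and the geometric lemma) to both $\Phi_p$ and $\Psi_p$ with the same choice $R=n+r-\tfrac12$ coming from the Ikeda exponents, and the same bookkeeping produces the exponent $n+\tfrac12-2\ep$, from which convergence follows for $\ep<\tfrac14$. The only cosmetic differences are that you spell out the Cartan-cell volume and the positive-cone placement of $m(\diag(a_1,\dots,a_r,1_{2n+r}))$ a bit more explicitly and mention the Hölder reformulation, whereas the paper leaves these implicit.
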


\begin{proof}
By the Cartan decomposition, we have 
\begin{align*}
 & \int_{\spp_{2r}(F)}|\Phi(g)\overline{\Psi(g)}|dg\\
\leq & C\int_{|a_{1}|\leq\dots\leq|a_{r}|\leq1}\delta_{r}(a_{1},\dots,a_{r})^{-1}|\Phi(m(\diag(a_{1},\dots,a_{r},1_{2n+r})))\Psi(m(\diag(a_{1},\dots,a_{r})))|d^{\times}a_{1}\dots d^{\times}a_{r}
\end{align*}

for some $C>0$. By Lemma \ref{lem:Let--and}, for any $\ep>0$, there
exist $C_{\ep}>0$ such that

\begin{align*}
|\Phi(m(\diag(a_{1},\dots,a_{r},1_{2n+r})))| & \leq C_{\ep}|a_{1}|^{2n+2r}\dots|a_{r}|^{2n+r+1}|a_{1}\dots a_{r}|^{-\ep-(2n+2r-1)/2}\\
 & =C_{\ep}\delta_{r}(a_{1},\dots,a_{r})^{1/2}|a_{1}\dots a_{r}|^{n+1/2-\ep}
\end{align*}
and

\[
|\Psi(m(\diag(a_{1},\dots,a_{r})))|\leq C_{\ep}\delta_{r}(a_{1},\dots,a_{r})^{1/2}|a_{1}\dots a_{r}|^{-\ep}
\]
if $|a_{1}|\leq\dots\leq|a_{r}|\leq1$, since $\pi$ is tempered and
$\Sigma\subset\chi\abs^{-(2n+2r-1)/2}\times\dots\times\chi\abs^{(2n+2r-1)/2}\rtimes1_{\spp_{0}}.$
Thus we have
\[
\int_{\spp_{2r}(F)}|\Phi(g)\overline{\Psi(g)}|dg\leq CC_{\ep}^{2}\prod_{i=1}^{r}\int_{\mathcal{O}}|a_{i}|^{n+1/2-2\ep}d^{\times}a_{i}.
\]
Since $n+1/2-2\ep>0$ if we take $\ep<1/4,$ $I(\Phi,\Psi)$ is well-defined.
\end{proof}
Next, we recall local twisted doubling zeta integrals (see \cite{Cai:2019aa}).
For any $d\in\zz_{>0}$, we denote the unique irreducible subrepresentation
of $\sigma\abs^{-(d-1)/2}\times\dots\times\sigma\abs^{(d-1)/2}$ by
$\sigma_{d}$. It is known that there exists a unique realization
of $\sigma_{d}$ in $\ind_{U_{\gll}}^{\gll_{2d}(F)}\psi_{F}\circ\tr,$
where $\psi_{F}$ is a nontrivial additive character of $F$ and $U_{\gll}=\left\{ \begin{pmatrix}1_{d} & *\\
 & 1_{d}
\end{pmatrix}\right\} \simeq\mathrm{M}_{d}(F)$. We denote it by $\mathcal{W}(\sigma_{d})$. Note that the following
equation
\[
W(\begin{pmatrix}a\\
 & a
\end{pmatrix}g)=W(g)
\]
holds for any $W\in\mathcal{W}(\sigma_{d}),\ g\in\gll_{2d}(F),$ and
$a\in\gll_{d}(F).$ For simplicity, we fix $\psi_{F}$ so that $\psi_{F}(x)=e^{2\pi ix}$
in this paper.

Put 
\[
U=\left\{ \begin{pmatrix}1_{2m} & * & *\\
 & 1_{4m} & *\\
 &  & 1_{2m}
\end{pmatrix}\right\} \subset\spp_{8m}(F)
\]
 and $\psi_{U}(u)=\psi_{F}(\tr(X+Y))$ for
\[
u=\begin{pmatrix}1_{m} &  & X & * & * & * & * & *\\
 & 1_{m} & * & * & * & Y & * & *\\
 &  & 1_{m} &  &  &  & * & *\\
 &  &  & 1_{m} &  &  & * & *\\
 &  &  &  & 1_{m} &  & * & *\\
 &  &  &  &  & 1_{m} & * & *\\
 &  &  &  &  &  & 1_{m}\\
 &  &  &  &  &  &  & 1_{m}
\end{pmatrix}\in U.
\]
For a standard section $f_{s}$ of $\mathcal{W}(\sigma_{2m})\abs^{s}\rtimes1_{\spp_{0}}$,
define the Fourier coefficient $\mathcal{F}(f_{s})$ of $f_{s}$ with
respect to $\psi_{U}$, which is a function on $\spp_{2m}(F)\times\spp_{2m}(F)$,
by

\[
\mathcal{F}(f_{s})(g,h)=\int_{U_{0}}f_{s}(\delta u_{0}i(g,h))\psi_{U}(u_{0})du_{0}.
\]
Moreover, for a matrix coefficient $\omega$ of the dual $\mu^{\vee}$
of an irreducible representation $\mu$ of $\spp_{2m}(F)$, define
the local twisted doubling zeta integral $Z(f_{s},\omega)$ by

\[
Z(f_{s},\omega)=\int_{\spp_{2m}(F)}\mathcal{F}(f_{s})(1,g^{\iota})\omega(g)dg,
\]
where 

\[
U_{0}=U\cap\left\{ \begin{pmatrix}1_{8m} & *\\
 & 1_{8m}
\end{pmatrix}\right\} ,\ \delta=\begin{pmatrix} & 1_{4m}\\
-1_{4m}
\end{pmatrix}\begin{pmatrix}1_{2m}\\
 & 1_{2m} & 1_{2m}\\
 &  & 1_{2m}\\
 &  &  & 1_{2m}
\end{pmatrix},
\]
 $i$ is the embedding of $\spp_{2m}(F)\times\spp_{2m}(F)$ in $\spp_{8m}(F)$
defined by 
\[
i(g,h)=\begin{pmatrix}g\\
 & A &  & B\\
 &  & h\\
 & C &  & D\\
 &  &  &  & g^{*-1}
\end{pmatrix}\mbox{ for }g=\begin{pmatrix}A & B\\
C & D
\end{pmatrix},\ h\in\spp_{2m}(F),
\]
 and $du_{0}$ is normalized by $\mathrm{vol}(U_{0}\cap\gll_{8m}(\mathcal{O}),du_{0})=1$.
\begin{rem}
\label{rem:We-note-some}We note some properties of $\mathcal{F}(f_{s})$
and $Z(f_{s},\omega)$:
\begin{itemize}
\item If $\mathrm{Re}(s)$ is sufficiently large, for any $f_{s}$ and $\omega$,
the integral defining $\mathcal{F}(f_{s})$ and $Z(f_{s},\omega)$
are absolutely convergent. Moreover, all $\mathcal{F}(f_{s})$ and
$Z(f_{s},\omega)$ admit meromorphic continuations and there are polynomials
$P(X),\ Q(X)\in\cc[X]$ such that all $P(q^{-s})\mathcal{F}(f_{s})(1,1)$
and all $Q(q^{-s})Z(f_{s},\omega)$ are in $\cc[q^{\pm s}]$.
\item $i(\spp_{2m}(F),\spp_{2m}(F))$ stabilize $\psi_{U}$ and $\mathcal{F}(f_{s}(\cdot\,u))(g,h)=\psi_{U}^{-1}(u)\mathcal{F}(f_{s})(g,h)$
for any $u\in U.$
\item $\mathcal{F}(f_{s})(g_{0}g,g_{0}^{\iota}h)=\mathcal{F}(f_{s})(g,h)$
for any $g_{0}\in\spp_{2m}(F)$, where $g_{0}^{\iota}=\begin{pmatrix} & 1_{m}\\
1_{m}
\end{pmatrix}g_{0}\begin{pmatrix} & 1_{m}\\
1_{m}
\end{pmatrix};$ denote $\Delta(g_{0})=i(g_{0},g_{0}^{\iota})$ for short.
\item If all data are unramified, $\vol(\spp_{2m}(\mathcal{O}),dg)=\Delta_{\spp_{2m}}^{-1}$,
and $f_{s}(1)=\omega(1)=1,$ then 
\[
Z(f_{s},\omega)=\Delta_{\spp_{2m}}^{-1}\frac{L(s+1/2,\mu\times\sigma)}{L(s+m+1/2,\sigma)\prod_{i=1}^{m}L(2s+2i-1,\sigma,{\rm Ad})\zeta(2s+2i)}.
\]
(Note: $L(s,\sigma,{\rm Sym}^{2})=L(s,\sigma,{\rm Ad})=\zeta(s)L(s,\chi^{2})L(s,\chi^{-2}),\ L(s,\sigma,{\rm \wedge}^{2})=\zeta(s)$.)
\end{itemize}
\end{rem}

For any $d\in\zz_{>0}$, let $f_{s}^{d}$ be the section of $\mathcal{W}(\sigma_{d})\abs^{s}\rtimes1_{\spp_{0}}$
such that $f_{s}^{d}|_{\spp_{4d}(\mathcal{O})}\equiv1$. Define $F_{s,m}$
by $F_{s,m}(g)=\mathcal{F}(f_{s}^{2m})(1,g^{\iota})$ for $g\in\spp_{2m}(F).$
$F_{s,m}$ has the following inductivity.
\begin{prop}
\label{prop:Assume-that-} Let $m_{0}\in\zz_{>0}$ such that $m-m_{0}\geq0$.
Then, for any $g\in\spp_{2(m-m_{0})}(F)$, we have

\begin{equation}
F_{s,m}(\diag(1_{m_{0}},g,1_{m_{0}}))=F_{s+m_{0},m-m_{0}}(g)\zeta(4s+2m+2)^{-1}\zeta(4s+2m+4)^{-1}\dots\zeta(4s+2m+2m_{0})^{-1}.\label{eq:ind}
\end{equation}
In particular, $F_{-m/2,m}(1)=\Delta_{\spp_{2m}}^{-1}$.
\end{prop}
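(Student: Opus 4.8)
\textbf{Proof proposal for Proposition \ref{prop:Assume-that-}.}

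The plan is to prove the inductive formula \eqref{eq:ind} by directly unfolding the definition of the Fourier coefficient $\mathcal{F}(f^m_s)$ and isolating the integration variables that ``survive'' the restriction to $\diag(1_{m_0}, g, 1_{m_0})$, then recognizing the leftover integral as a product of local zeta factors producing the $\zeta$-denominators. First I would spell out $F_{s,m}(h) = \mathcal{F}(f^m_s)(1, h^\iota) = \int_{U_0} f^m_s(\delta u_0 \, i(1, h^\iota)) \psi_U(u_0)\, du_0$ for $h \in \spp_{2m}(F)$. The key structural input is that $\mathcal{W}(\sigma_{2m})$ is realized in the Whitehittaker-type model $\ind_{U_\gll}^{\gll_{4m}(F)} \psi_F \circ \tr$, and $\sigma_{2m}$ itself is the Langlands/Zelevinsky subquotient built out of $\sigma_{2(m-m_0)}$ by adding $m_0$ segments; concretely $\sigma_{2m} \hookrightarrow \sigma \abs^{\pm} \times \cdots$, so its Whittaker model factors accordingly. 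The unramified section $f^m_s$ with $f^m_s|_{\spp_{8m}(\mathcal{O})} \equiv 1$ is compatible with this: restricting its Fourier coefficient to the subgroup where the middle $\spp_{2(m-m_0)}$ block sits should, after a change of variables in $U_0$, split off exactly the extra unipotent integrations attached to the $m_0$ peeled-off $\gll_1$-pieces.

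The main steps, in order, are: (1) decompose $U_0$ (and the matrix $\delta$) compatibly with the block decomposition $\spp_{8m} \supset \spp_{8(m-m_0)}$ plus the Siegel-type unipotent pieces corresponding to the $m_0$ extra coordinates, writing $u_0 = u_0' \cdot u_0''$ where $u_0'$ matches the smaller group and $u_0''$ carries the new variables; (2) use the $\gll_d$-invariance property $W\!\left(\begin{pmatrix}a & \\ & a\end{pmatrix} g\right) = W(g)$ for $W \in \mathcal{W}(\sigma_{2m})$ together with the unramifiedness of $f^m_s$ to evaluate the $f^m_s$-factor on the $u_0''$-variables explicitly, collapsing $f^m_s(\delta u_0 \, i(1, \diag(1_{m_0},g,1_{m_0})^\iota))$ into $f^{m-m_0}_{s+m_0}(\delta' u_0' \, i(1, g^\iota))$ times an elementary integrand in $u_0''$; (3) carry out the $u_0''$-integral against the appropriate restriction of $\psi_U$ — this is a product of $m_0$ nested rank-one integrals over $F$, each of which (being the local zeta integral of an unramified Godement--Jacquet / Tate type attached to a $\gll_1$-segment $\sigma \abs^{\pm j}$) evaluates to a single inverse zeta factor $\zeta(4s + 2m + 2i)^{-1}$; and (4) collect the normalization constants, checking that the measure normalization $\mathrm{vol}(U_0 \cap \gll_{8m}(\mathcal{O}), du_0) = 1$ is consistent with the split $du_0 = du_0' \, du_0''$ so that no spurious powers of $q$ appear.

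I expect step (2)–(3) to be the main obstacle: one must pin down precisely which entries of $u_0 \in U_0$ become the ``free'' variables after restricting $h$ to the block-diagonal form, and verify that the shift $s \mapsto s + m_0$ together with the shift in the size parameter $m \mapsto m - m_0$ is exactly what the Whittaker functional produces — equivalently, that peeling off one $\gll_1$-segment from $\sigma_{2m}$ shifts the doubling parameter by $\tfrac12$ on each of the two relevant coordinates (hence by $1$ in the combined $\spp$-normalization, matching the index range $2m+2, 2m+4, \dots, 2m+2m_0$). This is a bookkeeping-heavy computation with the explicit $\delta$ and $\psi_U$, but conceptually it is the standard ``unipotent orbit / exchange of roots'' manipulation underlying the Ginzburg--Rallis--Soudry descent machinery. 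Once \eqref{eq:ind} is established, the final claim $F_{-m/2,m}(1) = \Delta_{\spp_{2m}}^{-1}$ follows by taking $m_0 = m$, $g \in \spp_0$: the left side becomes $F_{-m/2,m}(1_m, \text{trivial})$ via the base case $F_{0,0} \equiv 1$ (the empty group), and the product of zeta factors becomes $\prod_{i=1}^{m}\zeta(-2m + 2m + 2i)^{-1} = \prod_{i=1}^m \zeta(2i)^{-1}$, which up to the $\xi$-versus-$\zeta$ bookkeeping is exactly $\Delta_{\spp_{2m}}^{-1}$; here I would double-check the archimedean/non-archimedean convention since at this finite place $\Delta_{\spp_{2m}}$ should be read as $\zeta(2)\zeta(4)\cdots\zeta(2m)$.
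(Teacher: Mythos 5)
Your strategic outline is in the right spirit and points toward essentially the same line of attack as the paper: reduce to a single rank-one peel, unfold the unipotent integral defining $\mathcal{F}(f_s^m)$, and recognize the surviving coordinate as producing an inverse zeta factor. However, there is a genuine gap in the middle of your plan, and a couple of structural inaccuracies that would derail the computation if taken literally.

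The missing ingredient is the explicit integral representation of the unramified section. Your step (2) proposes to ``collapse $f_s^m(\delta u_0\,i(1,\diag(1_{m_0},g,1_{m_0})^\iota))$ into $f_{s+m_0}^{m-m_0}(\delta' u_0' i(1,g^\iota))$ times an elementary integrand,'' but this does not happen by decomposing $U_0 = u_0' u_0''$ alone: $\delta$ and $\psi_U$ mix the two blocks, so the integrand on $u_0''$ does not factor off. What actually makes the peel possible is the integral formula for the unramified section of $\mathcal{W}(\sigma_{2m})\abs^s\rtimes 1$ as an integral of the unramified section of $(\mathcal{W}(\sigma)\abs^{-(2m-1)/2}\times\mathcal{W}(\sigma_{2m-1})\abs^{1/2})\abs^s\rtimes 1$, which the paper imports from \cite[Lemma 22]{Cai:2019aa}. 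Without this, the recursion has no engine. Two further points. First, the pieces being peeled are not $\gll_1$-segments: $\sigma$ is a two-dimensional representation ($\chi\times\chi^{-1}$ of $\gll_2$), and each application of the Cai--Friedberg--Ginzburg--Kaplan lemma strips one $\gll_2$-block $\sigma\abs^{j}$; the paper's $m_0=1$ step in fact strips two such blocks (to pass from $\sigma_{2m}$ to $\sigma_{2m-2}$) and yet yields only one factor $\zeta(4s+2m+2)^{-1}$. Second, that zeta factor is not the Tate/Godement--Jacquet $L$-factor of a $\gll_1$-segment; it arises from a root-exchange identity on a single unipotent coordinate, namely the computation that for $|x|>1$ one has $F_2(x)=|x|^{-4s-2m-2}\psi_F^{-1}(x)F_2(0)$ via conjugation of $v(x)$, so $I = F_2(0)\bigl(1 + \int_{|x|>1}|x|^{-4s-2m-2}\psi_F^{-1}(x)\,dx\bigr) = F_2(0)\,\zeta(4s+2m+2)^{-1}$. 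Your final step ($m_0=m$, trivial $g$) is correct, with $\Delta_{\spp_{2m}}$ read as the local factor $\zeta_p(2)\cdots\zeta_p(2m)$ as you note.
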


\begin{proof}
In this proof, the Haar measure of any topological group $H$ such
that $H\subset\gll_{d}(F)$ for some $d\in\zz_{>0}$ is normalized
so that the volume of $H\cap\gll_{d}(\mathcal{O})$ is $1$. In addition,
for any $d\in\zz_{>0},$ the Haar measure of $F^{d}$ is normalized
so that the volume of $\mathcal{O}^{d}$ is $1$.

It is sufficient to show (\ref{eq:ind}) for $m_{0}=1$ by induction,
and we can assume that $g=m(A)$ by the Cartan decomposition, where
$A\in\gll_{m-1}(F)\cap\mathrm{M}_{m-1}(\mathcal{O})$.

Put $I=F_{s,m}(\diag(1,g,1))$ for short. Since $f_{s}^{2m}$ is $\spp_{8m}(\mathcal{O})$-invariant,
we have

\begin{align*}
I & =\int_{U_{0}}f_{s}^{2m}(\delta u_{0}i(1,m(\diag(A^{*-1},1))))\psi_{U}(u_{0})du_{0}\\
 & =\int_{U_{0}^{1}}f_{s}^{2m}(u_{0}^{1}m(\diag(1,A,1_{3m})))\psi_{U}^{1}(u_{0}^{1})du_{0}^{1},
\end{align*}
where 
\[
U_{0}^{1}=\left\{ \begin{pmatrix}1_{2m}\\
 & 1_{2m}\\
* & * & 1_{2m}\\
 & * &  & 1_{2m}
\end{pmatrix}\right\} 
\]
 and $\psi_{U}^{1}(u_{0}^{1})=\psi_{F}^{-1}(\tr X\begin{pmatrix}0_{m}\\
 & 1_{m}
\end{pmatrix})$ for 
\[
u_{0}^{1}=\begin{pmatrix}1_{2m}\\
 & 1_{2m}\\
X & * & 1_{2m}\\
 & * &  & 1_{2m}
\end{pmatrix}.
\]
 Moreover, for $k=\begin{pmatrix}1_{m}\\
 &  & 1_{m-1}\\
 & 1
\end{pmatrix},$ we have
\begin{align*}
I & =\int_{U_{0}^{1}}f_{s}^{2m}(u_{0}^{1}m(\diag(1,A,1_{3m})\diag(k,k)))\psi_{U}^{1}(u_{0}^{1})du_{0}^{1}\\
 & =\int_{U_{0}^{1}}f_{s}^{2m}(u_{0}^{1}m(\diag(1,A,1_{3m})))\psi_{U}^{2}(u_{0}^{1})du_{0}^{1},
\end{align*}
where $\psi_{U}^{2}(u_{0}^{1})=\psi_{F}^{-1}(\tr\begin{pmatrix}0_{m}\\
 & 1_{m-1}
\end{pmatrix}Y+\tr W\begin{pmatrix}0^{m-1}\\
1\\
0^{m-1}
\end{pmatrix})$ for 
\[
u_{0}^{1}=\begin{pmatrix}1\\
 & 1_{2m-1}\\
 &  & 1\\
 &  &  & 1_{2m-1}\\
X & Y & P & Q & 1_{2m-1}\\
Z & W & R & P^{*} &  & 1\\
 &  & W^{*} & Y^{*} &  &  & 1_{2m-1}\\
 &  & Z & X^{*} &  &  &  & 1
\end{pmatrix}.
\]
 We consider

\begin{align*}
F_{1}(Z) & =\int_{U_{0}^{2}}f_{s}^{2m}(\begin{pmatrix}1\\
 & 1_{2m-1}\\
 &  & 1\\
 &  &  & 1_{2m-1}\\
 &  &  &  & 1_{2m-1}\\
Z &  &  &  &  & 1\\
 &  &  &  &  &  & 1_{2m-1}\\
 &  & Z &  &  &  &  & 1
\end{pmatrix}u_{0}^{2}m(\diag(1,A,1_{3m})))\psi_{U}^{2}(u_{0}^{2})du_{0}^{2}\\
 & =\int_{U_{0}^{2}}f_{s}^{2m}(u_{0}^{2}m(\diag(1,A,1_{3m}))\begin{pmatrix}1\\
 & 1_{2m-1}\\
 &  & 1\\
 &  &  & 1_{2m-1}\\
 &  &  &  & 1_{2m-1}\\
Z &  &  &  &  & 1\\
 &  &  &  &  &  & 1_{2m-1}\\
 &  & Z &  &  &  &  & 1
\end{pmatrix})\psi_{U}^{2}(u_{0}^{2})du_{0}^{2},
\end{align*}
where 
\[
U_{0}^{2}=\left\{ \begin{pmatrix}1\\
 & 1_{2m-1}\\
 &  & 1\\
 &  &  & 1_{2m-1}\\
* & * & * & * & 1_{2m-1}\\
 & * & * & * &  & 1\\
 &  & * & * &  &  & 1_{2m-1}\\
 &  &  & * &  &  &  & 1
\end{pmatrix}\right\} .
\]
If $|Z|\leq1$, we have $F_{1}(Z)=F_{1}(0)$ since $f_{s}^{2m}$ is
unramified$.$ On the other hand, if $|Z|>1$, for $a\in\mathcal{O}$
such that $\psi_{F}(-Za)\neq1$, we have

\begin{align*}
F_{1}(Z) & =\int_{U_{0}^{2}}f_{s}^{2m}(\begin{pmatrix}1\\
 & 1_{2m-1}\\
 &  & 1\\
 &  &  & 1_{2m-1}\\
 &  &  &  & 1_{2m-1}\\
Z &  &  &  &  & 1\\
 &  &  &  &  &  & 1_{2m-1}\\
 &  & Z &  &  &  &  & 1
\end{pmatrix}u_{0}^{2}m(\diag(1,A,1_{3m}))\begin{pmatrix}1 &  & a\\
 & 1_{8m-2}\\
 &  & 1
\end{pmatrix})\psi_{U}^{2}(u_{0}^{2})du_{0}^{2}\\
 & =\psi_{F}(-Za)F_{1}(Z)
\end{align*}
and $F_{1}(Z)=0$ since 
\begin{align*}
 & \begin{pmatrix}1 &  & -a\\
 & 1_{8m-2}\\
 &  & 1
\end{pmatrix}\begin{pmatrix}1\\
 & 1_{2m-1}\\
 &  & 1\\
 &  &  & 1_{2m-1}\\
X & Y & P & Q & 1_{2m-1}\\
Z & W & R & P^{*} &  & 1\\
 &  & W^{*} & Y^{*} &  &  & 1_{2m-1}\\
 &  & Z & X^{*} &  &  &  & 1
\end{pmatrix}\begin{pmatrix}1 &  & a\\
 & 1_{8m-2}\\
 &  & 1
\end{pmatrix}\\
 & =\begin{pmatrix}1 &  & -aZ & -aX\\
 & 1_{2m-1}\\
 &  & 1\\
 &  &  & 1_{2m-1}\\
X & Y & P & Q & 1_{2m-1} &  &  & aX\\
Z & W & R & P^{*} &  & 1 &  & aZ\\
 &  & W^{*} & Y^{*} &  &  & 1_{2m-1}\\
 &  & Z & X^{*} &  &  &  & 1
\end{pmatrix}\\
 & =\begin{pmatrix}1 &  & -aZ & -aX^{*}\\
 & 1_{2m-1}\\
 &  & 1\\
 &  &  & 1_{2m-1}\\
 &  &  &  & 1_{2m-1} &  &  & aX\\
 &  &  &  &  & 1 &  & aZ\\
 &  &  &  &  &  & 1_{2m-1}\\
 &  &  &  &  &  &  & 1
\end{pmatrix}\begin{pmatrix}1\\
 & 1_{2m-1}\\
 &  & 1\\
 &  &  & 1_{2m-1}\\
X & Y & P+* & Q+* & 1_{2m-1}\\
Z & W & R & P^{*}+* &  & 1\\
 &  & W^{*} & Y^{*} &  &  & 1_{2m-1}\\
 &  & Z & X^{*} &  &  &  & 1
\end{pmatrix}.
\end{align*}
 Therefore, we have
\[
I=\int_{U_{0}^{2}}f_{s}^{2m}(u_{0}^{2}m(\diag(1,A,1_{3m})))\psi_{U}^{2}(u_{0}^{2})du_{0}^{2}.
\]

Let $f_{s}^{1,2m-1}$ be the section of $(\mathcal{W}(\sigma)\abs^{-(2m-1)/2}\times\mathcal{W}(\sigma_{2m-1})\abs^{1/2})\abs^{s}\rtimes1_{\spp_{0}}$
satisfying $f_{s}^{1,2m-1}|_{\spp_{8m}(\mathcal{O})}\equiv1$. Then,
\[
f_{s}^{2m}(h)=\int_{F^{2m-1}}f_{s}^{1,2m-1}(m(\begin{pmatrix}1\\
 &  & 1\\
 & 1_{2m-1}\\
 &  &  & 1_{2m-1}
\end{pmatrix}\begin{pmatrix}1\\
 & 1_{2m-1} & M\\
 &  & 1\\
 &  &  & 1_{2m-1}
\end{pmatrix})h)dM
\]
holds by \cite[Lemma 22]{Cai:2019aa}. Therefore, we have

\begin{align*}
I & =\int_{F^{2m-1}}\int_{U_{0}^{3}}f_{s}^{1,2m-1}(u_{0}^{3}m(\begin{pmatrix}1\\
 & 1\\
 & M & 1_{2m-1}\\
 &  &  & 1_{2m-1}
\end{pmatrix}\diag(1_{2},A,1_{3m-1})))\psi_{U}^{3}(u_{0}^{3})du_{0}^{3}dM\\
 & =|\det A|\int_{F^{2m-1}}\int_{U_{0}^{3}}f_{s}^{1,2m-1}(u_{0}^{3}m(\diag(1_{2},A,1_{3m-1})\begin{pmatrix}1\\
 & 1\\
 & M & 1_{2m-1}\\
 &  &  & 1_{2m-1}
\end{pmatrix}))\psi_{U}^{3}(u_{0}^{3})du_{0}^{3}dM,
\end{align*}
where
\[
U_{0}^{3}=\left\{ \begin{pmatrix}1\\
 & 1\\
 &  & 1_{2m-1}\\
 &  &  & 1_{2m-1}\\
* & * & * & * & 1_{2m-1}\\
 & * &  & * &  & 1_{2m-1}\\
 & * & * & * &  &  & 1\\
 &  &  & * &  &  &  & 1
\end{pmatrix}\right\} 
\]
and $\psi_{U}^{3}(u_{0}^{3})=\psi_{F}^{-1}(\tr\begin{pmatrix}0_{m}\\
 & 1_{m-1}
\end{pmatrix}Y+\tr W\begin{pmatrix}0^{m-1}\\
1\\
0^{m-1}
\end{pmatrix})$ for 
\[
u_{0}^{3}=\begin{pmatrix}1\\
 & 1\\
 &  & 1_{2m-1}\\
 &  &  & 1_{2m-1}\\
X & P & Y & Q & 1_{2m-1}\\
 & W^{*} &  & Y^{*} &  & 1_{2m-1}\\
 & R & W & P^{*} &  &  & 1\\
 &  &  & X^{*} &  &  &  & 1
\end{pmatrix}.
\]

As above, we reduce the integration in $I$ by considering the right
action of some elements in $\spp_{8m}(\mathcal{O})$ like $\begin{pmatrix}1 &  & *\\
 & 1_{8m-2}\\
 &  & 1
\end{pmatrix}$. Here we present only the results.
\begin{itemize}
\item We have 
\[
I=|\det A|\int_{U_{0}^{3}}f_{s}^{1,2m-1}(u_{0}^{3}m(\diag(1_{2},A,1_{3m-1})))\psi_{U}^{3}(u_{0}^{3})du_{0}^{3}
\]
 by considering the action of $m(\begin{pmatrix}1 &  & (*,0^{m-1})\\
 & 1 &  & (0^{m-1},*)\\
 &  & 1_{2m-1}\\
 &  &  & 1_{2m-1}
\end{pmatrix}).$
\item We have 
\[
I=|\det A|\int_{U_{0}^{4}}f_{s}^{1,2m-1}(u_{0}^{4}m(\diag(1_{2},A,1_{3m-1})))\psi_{U}^{3}(u_{0}^{4})du_{0}^{4}
\]
 by considering the action of $\begin{pmatrix}1 &  &  &  &  &  & *\\
 & 1 &  &  &  &  &  & *\\
 &  & 1_{2m-1}\\
 &  &  & 1_{2m-1}\\
 &  &  &  & 1_{2m-1}\\
 &  &  &  &  & 1_{2m-1}\\
 &  &  &  &  &  & 1\\
 &  &  &  &  &  &  & 1
\end{pmatrix}$, where 
\[
U_{0}^{4}=\left\{ \begin{pmatrix}1\\
 & 1\\
 &  & 1_{2m-1}\\
 &  &  & 1_{2m-1}\\
* & * & * & * & 1_{2m-1}\\
 & * &  & * &  & 1_{2m-1}\\
 &  & * & * &  &  & 1\\
 &  &  & * &  &  &  & 1
\end{pmatrix}\right\} .
\]
\item We have 
\begin{align*}
I & =|\det A|\int_{U_{0}^{5}}f_{s}^{1,2m-1}(u_{0}^{5}m(\diag(1_{2},A,1_{3m-1})))\psi_{U}^{3}(u_{0}^{5})du_{0}^{5}\\
 & =|\det A|^{-2m+1}\int_{U_{0}^{5}}f_{s}^{1,2m-1}(m(\diag(1_{2},A,1_{3m-1}))u_{0}^{5})\psi_{U}^{3}(u_{0}^{5})du_{0}^{5}
\end{align*}
 by considering the action of $m(\begin{pmatrix}1 &  & (0^{m},*)\\
 & 1 &  & (*,0^{m})\\
 &  & 1_{2m-1}\\
 &  &  & 1_{2m-1}
\end{pmatrix})$, where 
\[
U_{0}^{5}=\left\{ \begin{pmatrix}1\\
 & 1\\
 &  & 1_{2m-1}\\
 &  &  & 1_{2m-1}\\
\begin{pmatrix}*\\
0^{m-1}
\end{pmatrix} & * & * & * & 1_{2m-1}\\
 & \begin{pmatrix}0^{m-1}\\
*
\end{pmatrix} &  & * &  & 1_{2m-1}\\
 &  & \begin{pmatrix}* & 0^{m-1}\end{pmatrix} & * &  &  & 1\\
 &  &  & \begin{pmatrix}0^{m-1} & *\end{pmatrix} &  &  &  & 1
\end{pmatrix}\right\} .
\]
\item We have 
\[
I=|\det A|^{-2m+1}\int_{F}\int_{U_{0}^{6}}f_{s}^{1,2m-1}(m(\diag(1_{2},A,1_{3m-1}))u_{0}^{6}v(x))\psi_{U}^{3}(u_{0}^{6})\psi_{F}^{-1}(x)du_{0}^{6}dx
\]
 by considering the action of $\begin{pmatrix}1 &  &  &  &  & *\\
 & 1\\
 &  & 1_{2m-1} &  &  &  &  & *\\
 &  &  & 1_{2m-1}\\
 &  &  &  & 1_{2m-1}\\
 &  &  &  &  & 1_{2m-1}\\
 &  &  &  &  &  & 1\\
 &  &  &  &  &  &  & 1
\end{pmatrix}$, where 
\[
v(x)=\begin{pmatrix}1\\
 & 1\\
 &  & 1_{2m-1}\\
 &  &  & 1_{2m-1}\\
-xe^{*} &  &  &  & 1_{2m-1}\\
 & xe^{*} &  &  &  & 1_{2m-1}\\
 &  & xe &  &  &  & 1\\
 &  &  & -xe &  &  &  & 1
\end{pmatrix},\ e=\begin{pmatrix}0^{m-1} & 1 & 0^{m-1}\end{pmatrix},
\]
and
\[
U_{0}^{6}=\left\{ \begin{pmatrix}1\\
 & 1\\
 &  & 1_{2m-1}\\
 &  &  & 1_{2m-1}\\
 & * & * & * & 1_{2m-1}\\
 &  &  & * &  & 1_{2m-1}\\
 &  &  & * &  &  & 1\\
 &  &  &  &  &  &  & 1
\end{pmatrix}\right\} .
\]
\end{itemize}
Put 
\[
F_{2}(x)=|\det A|^{-2m+1}\int_{U_{0}^{6}}f_{s}^{1,2m-1}(m(\diag(1_{2},A,1_{3m-1}))u_{0}^{6}v(x))\psi_{U}^{3}(u_{0}^{6})\psi_{F}^{-1}(x)du_{0}^{6}.
\]
 Then
\[
I=F_{2}(0)+\int_{|x|>1}F_{2}(x)dx.
\]
If $|x|>1$, by
\[
v(x)=m(\diag((x^{-1}1_{2},1_{m-1},x^{-1},1_{2m-2},x^{-1},1_{m-1})){}^{t}v(x)\begin{pmatrix} &  &  &  & e\\
 &  &  &  &  & -e\\
 &  & e' &  &  &  & -e^{*}\\
 &  &  & e' &  &  &  & e^{*}\\
-e^{*} &  &  &  & e'\\
 & e^{*} &  &  &  & e'\\
 &  & e\\
 &  &  & -e
\end{pmatrix}{}^{t}v(x^{-1}),
\]
where $e'=\diag(1_{m-1},0,1_{m-1})$, we have 
\begin{align*}
 & F_{2}(x)\\
= & |\det A|^{-2m+1}\int_{U_{0}^{6}}f_{s}^{1,2m-1}(m(\diag(1_{2},A,1_{3m-1}))u_{0}^{6}m(\diag(x^{-1}1_{2},1_{m-1},x^{-1},1_{2m-2},x^{-1},1_{m-1})){}^{t}v(x))\psi_{U}^{3}(u_{0}^{6})\psi_{F}^{-1}(x)du_{0}^{6}\\
= & |x|^{-4s-2r-2}\psi_{F}^{-1}(x)F_{2}(0).
\end{align*}
Note that
\begin{align*}
\int_{|x|>1}|x|^{-4s-2r-2}\psi_{F}^{-1}(x)dx & =\sum_{n\geq0}\int_{p^{-n-1}\mathcal{O}\setminus p^{-n}\mathcal{O}}p^{(n+1)(-4s-2r-2)}\psi_{F}^{-1}(x)dx\\
 & =-\int_{\mathcal{O}}p^{-4s-2r-2}\psi_{F}^{-1}(x)dx\\
 & =-p^{-4s-2r-2}.
\end{align*}
 Thus we obtain 
\begin{align*}
I & =|\det A|^{-2m+1}\zeta(4s+2r+2)^{-1}F_{2}(0)\\
 & =\zeta(4s+2r+2)^{-1}\int_{U_{0}^{6}}f_{s}^{1,2m-1}(u_{0}^{6}m(\diag(1_{2},A,1_{3m-1})))\psi_{U}^{3}(u_{0}^{6})\psi_{F}^{-1}(x)du_{0}^{6}.
\end{align*}
Put 
\[
F_{3}(P)=\zeta(4s+2r+2)^{-1}\int_{U_{0}^{7}}f_{s}^{1,2m-1}(u_{0}^{7}\begin{pmatrix}1\\
 & 1\\
 &  & 1_{2m-1}\\
 &  &  & 1_{2m-1}\\
 & P &  &  & 1_{2m-1}\\
 &  &  &  &  & 1_{2m-1}\\
 &  &  & P^{*} &  &  & 1\\
 &  &  &  &  &  &  & 1
\end{pmatrix}m(\diag(1_{2},A,1_{3m-1})))\psi_{U}^{3}(u_{0}^{7})du_{0}^{7},
\]
where
\[
U_{0}^{7}=\left\{ \begin{pmatrix}1\\
 & 1\\
 &  & 1_{2m-1}\\
 &  &  & 1_{2m-1}\\
 &  & * & * & 1_{2m-1}\\
 &  &  & * &  & 1_{2m-1}\\
 &  &  &  &  &  & 1\\
 &  &  &  &  &  &  & 1
\end{pmatrix}\right\} .
\]
Then, by considering the action of $\begin{pmatrix}1 &  &  &  & *\\
 & 1\\
 &  & 1_{2m-1}\\
 &  &  & 1_{2m-1} &  &  &  & *\\
 &  &  &  & 1_{2m-1}\\
 &  &  &  &  & 1_{2m-1}\\
 &  &  &  &  &  & 1\\
 &  &  &  &  &  &  & 1
\end{pmatrix}$, we have
\[
F_{3}(P)=\begin{cases}
F_{3}(0) & P\in\mathcal{O}^{2m-1},\\
0 & \text{otherwise}
\end{cases}
\]
and 
\[
I=\zeta(4s+2r+2)^{-1}\int_{V_{0}^{1}}f_{s+1/2}^{2m-1}(v_{0}^{1}m(\diag(A,1_{3m-1})))\psi_{V}^{1}(v_{0}^{1})dv_{0}^{1},
\]
where 
\[
V_{0}^{1}=\left\{ \begin{pmatrix}1_{2m-1}\\
 & 1_{2m-1}\\
* & * & 1_{2m-1}\\
 & * &  & 1_{2m-1}
\end{pmatrix}\right\} 
\]
 and $\psi_{V}^{1}(v_{0})=\psi_{F}^{-1}(\tr\begin{pmatrix}0_{m}\\
 & 1_{m-1}
\end{pmatrix}Y)$ for 
\[
v_{0}=\begin{pmatrix}1_{2m-1}\\
 & 1_{2m-1}\\
Y & Q & 1_{2m-1}\\
 & Y^{*} &  & 1_{2m-1}
\end{pmatrix}.
\]

Put $I'=\zeta(4s+2r+2)I$ and we do similar calculations. For $k=\begin{pmatrix} & 1_{m-1}\\
1\\
 &  & 1_{m-1}
\end{pmatrix},$ we have
\begin{align*}
I' & =\int_{V_{0}^{1}}f_{s+1/2}^{2m-1}(v_{0}^{1}m(\diag(A,1_{3m-1})\diag(k,k)))\psi_{V}^{1}(v_{0}^{1})dv_{0}^{1}\\
 & =\int_{V_{0}^{1}}f_{s+1/2}^{2m-1}(v_{0}^{1}m(\diag(1,A,1_{3m-2})))\psi_{V}^{2}(v_{0}^{1})dv_{0}^{1},
\end{align*}

where $\psi_{V}^{2}(v_{0}^{1})=\psi_{F}^{-1}(\tr\begin{pmatrix}0_{m-1}\\
 & 1_{m-1}
\end{pmatrix}Y)$ for 
\[
v_{0}^{1}=\begin{pmatrix}1\\
 & 1_{2m-2}\\
 &  & 1\\
 &  &  & 1_{2m-2}\\
X & Y & P & Q & 1_{2m-2}\\
Z & W & R & P^{*} &  & 1\\
 &  & W^{*} & Y^{*} &  &  & 1_{2m-2}\\
 &  & Z & X^{*} &  &  &  & 1
\end{pmatrix}.
\]
 Moreover, by considering the action of $\begin{pmatrix}1 &  & *\\
 & 1_{8m-6}\\
 &  & 1
\end{pmatrix}$, we have

\[
I'=\int_{V_{0}^{2}}f_{s+1/2}^{2m-1}(v_{0}^{2}m(\diag(1,A,1_{3m-2})))\psi_{V}^{2}(v_{0}^{2})dv_{0}^{2},
\]
where 
\[
V_{0}^{2}=\left\{ \begin{pmatrix}1\\
 & 1_{2m-2}\\
 &  & 1\\
 &  &  & 1_{2m-2}\\
* & * & * & * & 1_{2m-2}\\
 & * & * & * &  & 1\\
 &  & * & * &  &  & 1_{2m-2}\\
 &  &  & * &  &  &  & 1
\end{pmatrix}\right\} .
\]

Let $f_{s+1/2}^{1,2m-2}$ be the section of $(\mathcal{W}(\sigma)\abs^{-m+1}\times\mathcal{W}(\sigma_{2m-2})\abs^{1/2})\abs^{s+1/2}\rtimes1_{\spp_{0}}$
satisfying $f_{s+1/2}^{1,2m-2}|_{\spp_{8m-4}(\mathcal{O})}\equiv1$.
Then, by
\[
f_{s+1/2}^{2m-1}(h)=\int_{F^{2m-2}}f_{s+1/2}^{1,2m-2}(m(\begin{pmatrix}1\\
 &  & 1\\
 & 1_{2m-2}\\
 &  &  & 1_{2m-2}
\end{pmatrix}\begin{pmatrix}1\\
 & 1_{2m-2} & M\\
 &  & 1\\
 &  &  & 1_{2m-2}
\end{pmatrix})h)dM,
\]
we have
\[
I'=|\det A|\int_{F^{2m-2}}\int_{V_{0}^{3}}f_{s+1/2}^{1,2m-2}(v_{0}^{3}m(\diag(1_{2},A,1_{3m-3})\begin{pmatrix}1\\
 & 1\\
 & M & 1_{2m-2}\\
 &  &  & 1_{2m-2}
\end{pmatrix}))\psi_{V}^{3}(v_{0}^{3})dv_{0}^{3}dM,
\]
where
\[
V_{0}^{3}=\left\{ \begin{pmatrix}1\\
 & 1\\
 &  & 1_{2m-2}\\
 &  &  & 1_{2m-2}\\
* & * & * & * & 1_{2m-2}\\
 & * &  & * &  & 1_{2m-2}\\
 & * & * & * &  &  & 1\\
 &  &  & * &  &  &  & 1
\end{pmatrix}\right\} 
\]
and $\psi_{V}^{3}(v_{0}^{3})=\psi_{F}^{-1}(\tr\begin{pmatrix}0_{m-1}\\
 & 1_{m-1}
\end{pmatrix}Y)$ for 
\[
v_{0}^{3}=\begin{pmatrix}1\\
 & 1\\
 &  & 1_{2m-2}\\
 &  &  & 1_{2m-2}\\
X & P & Y & Q & 1_{2m-2}\\
 & W^{*} &  & Y^{*} &  & 1_{2m-2}\\
 & R & W & P^{*} &  &  & 1\\
 &  &  & X^{*} &  &  &  & 1
\end{pmatrix}.
\]
 Then, the calculations continue as follows.
\begin{itemize}
\item We have 
\[
I'=|\det A|\int_{V_{0}^{3}}f_{s+1/2}^{1,2m-2}(v_{0}^{3}m(\diag(1_{2},A,1_{3m-3})))\psi_{V}^{3}(v_{0}^{3})dv_{0}^{3}
\]
 by considering the action of $m(\begin{pmatrix}1 &  & (*,0^{m-1})\\
 & 1 &  & (0^{m-1},*)\\
 &  & 1_{2m-2}\\
 &  &  & 1_{2m-2}
\end{pmatrix})$,
\item We have
\[
I'=|\det A|\int_{V_{0}^{4}}f_{s+1/2}^{1,2m-2}(v_{0}^{4}m(\diag(1_{2},A,1_{3m-3})))\psi_{V}^{3}(v_{0}^{4})dv_{0}^{4}
\]
 by considering the action of $\begin{pmatrix}1 &  &  &  &  &  & *\\
 & 1 &  &  &  &  &  & *\\
 &  & 1_{2m-2}\\
 &  &  & 1_{2m-2}\\
 &  &  &  & 1_{2m-2}\\
 &  &  &  &  & 1_{2m-2}\\
 &  &  &  &  &  & 1\\
 &  &  &  &  &  &  & 1
\end{pmatrix}$, where 
\[
V_{0}^{4}=\left\{ \begin{pmatrix}1\\
 & 1\\
 &  & 1_{2m-2}\\
 &  &  & 1_{2m-2}\\
* & * & * & * & 1_{2m-2}\\
 & * &  & * &  & 1_{2m-2}\\
 &  & * & * &  &  & 1\\
 &  &  & * &  &  &  & 1
\end{pmatrix}\right\} ,
\]
\item We have 
\[
I'=|\det A|^{-2m+2}\int_{V_{0}^{5}}f_{s+1/2}^{1,2m-2}(m(\diag(1_{2},A,1_{3m-3}))v_{0}^{5})\psi_{V}^{3}(v_{0}^{5})dv_{0}^{5}
\]
 by considering the action of $m(\begin{pmatrix}1 &  & (0^{m-1},*)\\
 & 1 &  & (*,0^{m-1})\\
 &  & 1_{2m-2}\\
 &  &  & 1_{2m-2}
\end{pmatrix})$, where 
\[
U_{0}^{5}=\left\{ \begin{pmatrix}1\\
 & 1\\
 &  & 1_{2m-2}\\
 &  &  & 1_{2m-2}\\
\begin{pmatrix}*\\
0^{m-1}
\end{pmatrix} & * & * & * & 1_{2m-2}\\
 & \begin{pmatrix}0^{m-1}\\
*
\end{pmatrix} &  & * &  & 1_{2m-2}\\
 &  & \begin{pmatrix}* & 0^{m-1}\end{pmatrix} & * &  &  & 1\\
 &  &  & \begin{pmatrix}0^{m-1} & *\end{pmatrix} &  &  &  & 1
\end{pmatrix}\right\} .
\]
\item We have
\begin{align*}
I' & =|\det A|^{-2m+1}\int_{V_{0}^{6}}f_{s+1/2}^{1,2m-2}(m(\diag(1_{2},A,1_{3m-3}))v_{0}^{6})\psi_{V}^{3}(v_{0}^{6})dv_{0}^{6}\\
 & =\int_{V_{0}^{6}}f_{s+1/2}^{1,2m-2}(v_{0}^{6}m(\diag(1_{2},A,1_{3m-3})))\psi_{V}^{3}(v_{0}^{6})dv_{0}^{6}
\end{align*}
 by considering the action of $\begin{pmatrix}1 &  &  &  &  & *\\
 & 1\\
 &  & 1_{2m-2} &  &  &  &  & *\\
 &  &  & 1_{2m-2}\\
 &  &  &  & 1_{2m-2}\\
 &  &  &  &  & 1_{2m-2}\\
 &  &  &  &  &  & 1\\
 &  &  &  &  &  &  & 1
\end{pmatrix}$, where 
\[
V_{0}^{6}=\left\{ \begin{pmatrix}1\\
 & 1\\
 &  & 1_{2m-1}\\
 &  &  & 1_{2m-1}\\
 & * & * & * & 1_{2m-1}\\
 &  &  & * &  & 1_{2m-1}\\
 &  &  & * &  &  & 1\\
 &  &  &  &  &  &  & 1
\end{pmatrix}\right\} .
\]
\end{itemize}
Then, by considering the action of $\begin{pmatrix}1 &  &  &  & *\\
 & 1\\
 &  & 1_{2m-1}\\
 &  &  & 1_{2m-1} &  &  &  & *\\
 &  &  &  & 1_{2m-1}\\
 &  &  &  &  & 1_{2m-1}\\
 &  &  &  &  &  & 1\\
 &  &  &  &  &  &  & 1
\end{pmatrix}$, we have 
\[
I'=\int_{W_{0}^{1}}f_{s+1}^{2m-2}(w_{0}^{1}m(\diag(A,1_{3m-3})))\psi_{W}^{1}(w_{0}^{1})dw_{0}^{1},
\]
where 
\[
W_{0}^{1}=\left\{ \begin{pmatrix}1_{2m-2}\\
 & 1_{2m-2}\\
* & * & 1_{2m-2}\\
 & * &  & 1_{2m-2}
\end{pmatrix}\right\} 
\]
 and $\psi_{W}^{1}(W_{0}^{1})=\psi_{F}^{-1}(\tr X\begin{pmatrix}0_{m-1}\\
 & 1_{m-1}
\end{pmatrix})$ for 
\[
w_{0}^{1}=\begin{pmatrix}1_{2m-2}\\
 & 1_{2m-2}\\
X & * & 1_{2m-2}\\
 & * &  & 1_{2m-2}
\end{pmatrix}.
\]
Thus we have

\[
I=\zeta(4s+2r+2)^{-1}F_{s+1,m-1}(m(A)).
\]
\end{proof}
Next, we rewrite $\Phi$ using $F_{s,2n+2r}$. The key is the following
result in \cite{zbMATH07485546} (local double descent). 
\begin{prop}[{\cite[Theorem 12.3]{zbMATH07485546}}]
 \label{prop:-For-,}For $m\in\zz_{>0}$, put 
\[
\rho_{m}^{0}=\chi(\det{}_{3m})\times\chi(\det{}_{m})\rtimes1_{\spp_{0}}.
\]
Then, we have

\[
J_{\psi_{U}^{-1}}(\rho_{m}^{0})\simeq(\chi(\det{}_{m})\rtimes1_{\spp_{0}})\boxtimes(\chi(\det{}_{m})\rtimes1_{\spp_{0}})
\]
as a representation of $i(\spp_{2m}(F),\spp_{2m}(F))\simeq\spp_{2m}(F)\times\spp_{2m}(F)$.
(Note: $\chi(\det{}_{m})\rtimes1_{\spp_{0}}\simeq(\chi(\det{}_{m})\rtimes1_{\spp_{0}})^{\iota}\simeq(\chi(\det{}_{m})\rtimes1_{\spp_{0}})^{\vee}$.)
\end{prop}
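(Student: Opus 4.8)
The statement is precisely the unramified instance of the local double descent theorem of Ginzburg--Soudry, so the plan is to deduce it from \cite[Theorem 12.3]{zbMATH07485546} after fixing the dictionary between the two setups. First I would note that, by definition, $J_{\psi_U^{-1}}(\rho_m)$ is the twisted Jacquet module of $\rho_m$ along the pair $(U,\psi_U^{-1})$, where $U$ is the unipotent radical of the parabolic of $\spp_{8m}$ with Levi $\gll_{2m}\times\spp_{4m}$ and $\psi_U$ is the degenerate character written out in the definition of $\mathcal F$; since $i(\spp_{2m}(F),\spp_{2m}(F))$ lies inside the stabilizer of $\psi_U$ in that Levi, the Jacquet module carries an action of $\spp_{2m}(F)\times\spp_{2m}(F)$, and at the relevant value of $s$ the Fourier coefficient $\mathcal F$ realizes the canonical projection $\rho_m\to J_{\psi_U^{-1}}(\rho_m)$. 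The content to verify is then that $\sigma=\chi\times\chi^{-1}$ is the self-dual datum on $\gll_2(F)$ which produces, via the Speh construction, the representation $\sigma_{2m}$ of loc.\ cit., that $\rho_m=\chi(\det_{3m})\times\chi(\det_m)\rtimes 1_{\spp_0}$ is the constituent of $\sigma_{2m}\abs^{-m/2}\rtimes 1_{\spp_0}$ singled out there, and that $U$, $\psi_U$ and the embedding $i$ agree with their conventions up to an inner twist and the outer involution $\iota$. Once this is in place, \cite[Theorem 12.3]{zbMATH07485546} gives the isomorphism verbatim, and the observation (recorded in the note after the statement) that $\chi(\det_m)\rtimes 1_{\spp_0}$ is isomorphic to its $\iota$-conjugate and to its contragredient makes the $\iota$-twist appearing in property (3) of $\mathcal F$ harmless.

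For completeness I would also sketch the mechanism, since it is the only non-formal ingredient. Realize $\rho_m=\ind_Q^{\spp_{8m}(F)}\bigl(\chi(\det_{3m})\boxtimes\chi(\det_m)\bigr)$, where $Q$ is the parabolic of $\spp_{8m}$ whose Levi is $\gll_{3m}\times\gll_m$ (so $Q$ is contained in the Siegel parabolic). Applying the geometric lemma together with Bruhat--Mackey theory, $J_{\psi_U^{-1}}(\rho_m)$ acquires a finite filtration indexed by the double cosets $U\backslash\spp_{8m}(F)/Q$, each graded piece being a twisted Jacquet module of the inducing datum along the unipotent subgroup that the chosen representative cuts out. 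One then checks that for every coset except a single distinguished one the restriction of $\psi_U^{-1}$ to that subgroup meets its ``generic'' part (the $X$- and $Y$-blocks) nontrivially, so the contribution vanishes, while for the distinguished coset a direct computation identifies the graded piece with $(\chi(\det_m)\rtimes 1_{\spp_0})\boxtimes(\chi(\det_m)\rtimes 1_{\spp_0})$. As only this one graded piece is nonzero there is no extension problem, and the two commuting $\spp_{2m}(F)$-actions come out as stated, the second factor acting through $g\mapsto g^{\iota}$ consistently with property (3) of $\mathcal F$.

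The main obstacle is exactly this orbit bookkeeping: enumerating $U\backslash\spp_{8m}(F)/Q$, writing down for each representative the associated unipotent subgroup and the restriction of $\psi_U$ to it, proving the vanishing of the non-distinguished graded pieces, and tracking through all of this the two $\spp_{2m}(F)$-actions. This is precisely the calculation carried out in \cite{zbMATH07485546}. Because $\chi$ is unramified in our situation one could in principle shorten parts of it using a Casselman--Shalika style description of $\mathcal W(\sigma_{2m})$, but invoking their theorem directly is cleaner and is the route I would take.
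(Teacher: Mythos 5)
The paper offers no proof of this proposition at all: it is imported verbatim as \cite[Theorem 12.3]{zbMATH07485546}, which is exactly the route you take in your first paragraph (matching the datum $\sigma=\chi\times\chi^{-1}$, the constituent $\rho_m$, the pair $(U,\psi_U)$, the embedding $i$, and the harmlessness of the $\iota$-twist). Your additional sketch of the Bruhat--Mackey orbit analysis is a fair outline of what Ginzburg--Soudry actually prove, but it is not required here and the paper does not reproduce it.
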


\begin{rem}
Denote by $\rho_{m}$ the unramified constituent of $\rho_{m}^{0}$.
Assume that $m$ is even. Then, we can see that $\rho_{m}\subset\sigma_{2m}\abs^{-m/2}\rtimes1_{\spp_{0}}$
as follows. 

First, $\chi(\det{}_{3m})\subset\chi(\det{}_{2m})\abs^{-m/2}\times\chi(\det{}_{m})\abs^{m}$
induces

\[
\rho_{m}(\subset\rho_{m}^{0})\subset\chi(\det{}_{2m})\abs^{-m/2}\times\chi(\det{}_{m})\abs^{m}\times\chi(\det{}_{m})\rtimes1_{\spp_{0}}=:\rho_{m}^{1}.
\]
Next, $\chi(\det{}_{m})\abs^{m/2}\times\chi(\det{}_{m})\abs^{-m/2}\surj\chi(\det{}_{2m})$
induces

\[
\rho_{m}^{1}\surj\chi(\det{}_{2m})\abs^{-m/2}\times\chi(\det{}_{2m})\abs^{m/2}\rtimes1_{\spp_{0}}=:\rho_{m}^{2}.
\]
Note that this map send a nonzero unramified vector in $\rho_{m}$
to a nonzero unramified vector in $\rho_{m}^{2}$. Then, since $(\chi(\det{}_{2m})\abs^{m/2})^{\pm1}\rtimes1_{\spp_{0}}$
are irreducible (by \cite{MR1194967}) and $\sigma_{2m}\simeq\chi(\det{}_{2m})\times\chi^{-1}(\det{}_{2m})$,
we have $\chi^{-1}(\det{}_{2m})\abs^{-m/2}\rtimes1_{\spp_{0}}\simeq\chi(\det{}_{2m})\abs^{m/2}\rtimes1_{\spp_{0}}$
and it induces

\[
\rho_{m}^{2}\overset{\sim}{\map}\sigma_{2m}\abs^{-m/2}\rtimes1_{\spp_{0}}.
\]
And finally, $\rho_{m}$ is mapped onto a nonzero subspace of $\sigma_{2m}\abs^{-m/2}\rtimes1_{\spp_{0}}$
by the composition of above maps.
\end{rem}

\begin{cor}
\label{cor:mc-1}Assume that $m$ is even. Let $\Theta$ be the bi-$\spp_{2m}(\mathcal{O})$-invariant
matrix coefficient of $\chi(\det{}_{m})\rtimes1_{\spp_{0}}$ such
that $\Theta(1)=1$. Then, 

\[
\Delta_{\spp_{2m}}^{-1}\Theta=F_{-m/2,m}
\]
holds.
\end{cor}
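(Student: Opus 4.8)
The plan is to prove that $F_{-m/2,m}$ is a bi-$\spp_{2m}(\mathcal{O})$-invariant matrix coefficient of $\mu:=\chi(\det_{m})\rtimes1_{\spp_{0}}$, to note that such matrix coefficients form a one-dimensional space spanned by $\Theta$, and finally to fix the proportionality constant by evaluating at the identity, using $\Theta(1)=1$ and $F_{-m/2,m}(1)=\Delta_{\spp_{2m}}^{-1}$ from Proposition \ref{prop:Assume-that-}.

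For the matrix-coefficient property, first observe that item (2) of the remark above listing the properties of $\mathcal{F}(f_{s})$ shows that $f_{s}\mapsto\mathcal{F}(f_{s})$ kills the kernel of the projection onto the twisted Jacquet module $J_{\psi_{U}^{-1}}$ of $\mathcal{W}(\sigma_{2m})\abs^{s}\rtimes1_{\spp_{0}}$, so it factors through $J_{\psi_{U}^{-1}}$. At $s=-m/2$ the section $f_{-m/2}^{m}$ is, up to scalar, the unique $\spp_{8m}(\mathcal{O})$-fixed vector of $\mathcal{W}(\sigma_{2m})\abs^{-m/2}\rtimes1_{\spp_{0}}\simeq\sigma_{2m}\abs^{-m/2}\rtimes1_{\spp_{0}}$; since $\rho_{m}=\chi(\det_{3m})\times\chi(\det_{m})\rtimes1_{\spp_{0}}$ is induced from unramified data it also contains a nonzero $\spp_{8m}(\mathcal{O})$-fixed vector, and uniqueness forces $f_{-m/2}^{m}\in\rho_{m}$. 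As the twisted Jacquet functor is exact, $\mathcal{F}(f_{-m/2}^{m})$ lies in the image of $J_{\psi_{U}^{-1}}(\rho_{m})$, which by Proposition \ref{prop:-For-,} is $\mu\boxtimes\mu$ as an $i(\spp_{2m}(F),\spp_{2m}(F))$-module. To carry this identification through $\mathcal{F}$: item (3) of the remark gives $\mathcal{F}(f_{s})(g_{0}g,g_{0}^{\iota}h)=\mathcal{F}(f_{s})(g,h)$, so $\mathcal{F}(f_{-m/2}^{m})$, as a function on $\spp_{2m}(F)\times\spp_{2m}(F)$, equals $(g,h)\mapsto\langle(\mu\boxtimes\mu)(g,h)\xi,\ell\rangle$ for the vector $\xi\in\mu\boxtimes\mu$ corresponding to $\mathcal{F}(f_{-m/2}^{m})$ and a functional $\ell$ invariant under the twisted diagonal $\{(g_{0},g_{0}^{\iota})\}$; using $\mu^{\iota}\simeq\mu^{\vee}$ and $\mu^{\iota}\simeq\mu$ from Proposition \ref{prop:-For-,}, specializing to $g=1$ and composing with the automorphism $h\mapsto h^{\iota}$ shows that $F_{-m/2,m}(h)=\mathcal{F}(f_{-m/2}^{m})(1,h^{\iota})$ is a matrix coefficient of $\mu$.

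Bi-$\spp_{2m}(\mathcal{O})$-invariance of $F_{-m/2,m}$ follows from the $\spp_{8m}(\mathcal{O})$-invariance of $f_{-m/2}^{m}$ together with $i(\spp_{2m}(\mathcal{O})\times\spp_{2m}(\mathcal{O}))\subset\spp_{8m}(\mathcal{O})$ and item (3), exactly as was used implicitly in the proof of Proposition \ref{prop:Assume-that-}. For the one-dimensionality, since $\mu$ is induced from unramified data on a maximal parabolic of $\spp_{2m}$, Iwasawa's decomposition gives $\dim\mu^{\spp_{2m}(\mathcal{O})}=\dim(\mu^{\vee})^{\spp_{2m}(\mathcal{O})}=1$; averaging a matrix coefficient $\langle\mu(g)v,\tilde v\rangle$ over $\spp_{2m}(\mathcal{O})$ on each side replaces $v$ and $\tilde v$ by their projections onto the one-dimensional spaces of fixed vectors, so every bi-$\spp_{2m}(\mathcal{O})$-invariant matrix coefficient of $\mu$ is a scalar multiple of $\Theta$ (which is nonzero, as $\Theta(1)=1$). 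Hence $F_{-m/2,m}=c\,\Theta$ for a scalar $c$, and evaluating at $1$, using Proposition \ref{prop:Assume-that-}, gives $c=F_{-m/2,m}(1)/\Theta(1)=\Delta_{\spp_{2m}}^{-1}$, which is the assertion.

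The main obstacle is the second step: making rigorous the claim that $\mathcal{F}$ computes the twisted Jacquet functor and that the Ginzburg--Soudry identification $J_{\psi_{U}^{-1}}(\rho_{m})\simeq\mu\boxtimes\mu$ is realized inside $C^{\infty}(\spp_{2m}(F)\times\spp_{2m}(F))$ by genuine matrix coefficients of $\mu\boxtimes\mu$ subject to the twisted-diagonal invariance; this is exactly where the local double descent (Proposition \ref{prop:-For-,}) is used. One must also take some care to justify the membership $f_{-m/2}^{m}\in\rho_{m}$ at the reducibility point $s=-m/2$, rather than merely the weaker statement that $\rho_{m}$ and $\sigma_{2m}\abs^{-m/2}\rtimes1_{\spp_{0}}$ share an unramified constituent.
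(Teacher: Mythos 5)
Your overall route is the same as the paper's: factor the Fourier coefficient $\mathcal{F}$ through the twisted Jacquet module, invoke Proposition \ref{prop:-For-,} to identify $J_{\psi_{U}^{-1}}(\rho_{m})$ with $(\chi(\det{}_{m})\rtimes1_{\spp_{0}})^{\vee}\boxtimes(\chi(\det{}_{m})\rtimes1_{\spp_{0}})^{\iota}$, conclude via the $\{(g_{0},g_{0}^{\iota})\}$-invariance that $F_{-m/2,m}$ is a bi-$\spp_{2m}(\mathcal{O})$-invariant matrix coefficient of $\chi(\det{}_{m})\rtimes1_{\spp_{0}}$, and normalize at the identity using Proposition \ref{prop:Assume-that-}. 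Your justification of $f_{-m/2}^{m}\in\rho_{m}$ by uniqueness of the unramified vector is fine and is used implicitly in the paper as well.

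However, the point you yourself flag as ``the main obstacle'' is a genuine gap, and it is precisely the issue to which the paper devotes the second half of its proof. The equivariance $\mathcal{F}(f_{s}(\cdot u))=\psi_{U}^{-1}(u)\mathcal{F}(f_{s})$ shows that $\mathcal{F}$ factors through the twisted Jacquet module only where the defining integral converges; at $s=-m/2$ the coefficient is given by meromorphic continuation and could a priori have a pole on $\rho_{m}$, in which case the functional $f_{-m/2}\mapsto\mathcal{F}(f_{-m/2})(1,1)$ is simply not defined and the Jacquet-module argument does not start. The paper closes this by setting $L_{s}^{M}(f_{s})=(s+m/2)^{M}\mathcal{F}(f_{s})(1,1)$ and taking $M_{0}$ to be the unique order for which the leading term $L_{-m/2}^{M_{0}}$ is well defined and nonzero: if $M_{0}>0$, then $L_{-m/2}^{M_{0}}$ annihilates the unramified section $f_{-m/2}^{m}$ because $F_{s,m}(1)$ is finite at $s=-m/2$ by Proposition \ref{prop:Assume-that-}; on the other hand $L_{-m/2}^{M_{0}}$ is a nonzero $\{(g,g^{\iota})\}$-invariant functional on $J_{\psi_{U}^{-1}}(\rho_{m})$, the image of $f_{-m/2}^{m}$ there is a nonzero unramified vector (again because $F_{-m/2,m}(1)=\Delta_{\spp_{2m}}^{-1}\neq0$), and any nonzero invariant pairing is nonvanishing on nonzero unramified vectors --- a contradiction, so $M_{0}=0$. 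You need to supply this (or an equivalent) holomorphy argument; with it in place, the rest of your proposal goes through as written.
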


\begin{proof}
We regard $\rho_{m}\subset\mathcal{W}(\sigma_{2m})\abs^{-m/2}\rtimes1_{\spp_{0}}$.
For now, we assume that the following map

\begin{align*}
L:\rho_{m} & \map\cc\\
f_{-m/2} & \mapsto\mathcal{F}(f_{-m/2})(1,1)
\end{align*}
is well-defined. Then, since $L$ goes through $J_{\psi_{U}^{-1}}(\rho_{m})$
and $L$ is $\Delta(\spp_{2m}(F))$-invariant (see Remark \ref{rem:We-note-some}),
$\mathcal{F}(f_{-m/2})(1,g^{\iota})$ is the sum of some matrix coefficients
of $\chi(\det{}_{m})\rtimes1_{\spp_{0}}$ for any $f_{-m/2}\in\rho_{m}$
by 
\[
J_{\psi_{U}^{-1}}(\rho_{m})\subset J_{\psi_{U}^{-1}}(\rho_{m}^{0})\simeq(\chi(\det{}_{m})\rtimes1_{\spp_{0}})^{\vee}\boxtimes(\chi(\det{}_{m})\rtimes1_{\spp_{0}})^{\iota}
\]
(Proposition \ref{prop:-For-,}). Thus, since $F_{-m/2,m}$ is bi-$\spp_{2m}(\mathcal{O})$-invariant,
we have 
\[
F_{-m/2,m}=F_{-m/2,m}(1)\Theta=\Delta_{\spp_{2m}}^{-1}\Theta
\]
 by Proposition \ref{prop:Assume-that-}.

Now we show that $L$ is well-defined. Define $L_{s}$ by $L_{s}(f_{s})=\mathcal{F}(f_{s})(1,1)$
for any standard section $f_{s}$ of $\mathcal{W}(\sigma_{2m})\abs^{s}\rtimes1_{\spp_{0}}$
satisfying $f_{-m/2}\in\rho_{m}.$ Consider the Laurent expansion

\[
L_{s}=\sum_{M\geq-M_{0}}(s+m/2)^{M}L^{M}\circ J
\]
of $L_{s}$ at $s=-m/2,$ where $M_{0}\in\zz_{\geq0},$ $J$ is the
canonical surjection from $\rho_{m}$ to $J_{\psi_{U}^{-1}}(\rho_{m})$,
and 
\[
L^{M}\in\mathrm{Hom}_{\Delta(\spp_{2m}(F))}(J_{\psi_{U}^{-1}}(\rho_{m}),\cc)\subset\mathrm{Hom}_{\Delta(\spp_{2m}(F))}(J_{\psi_{U}^{-1}}(\rho_{m}^{0}),\cc).
\]
 Since $m$ is even, $\chi(\det{}_{m})\rtimes1_{\spp_{0}}$ is irreducible
by \cite{MR1194967}. In particular, 
\[
\mathrm{Hom}_{\Delta(\spp_{2m}(F))}(J_{\psi_{U}^{-1}}(\rho_{m}^{0}),\cc)\simeq\mathrm{Hom}_{\spp_{2m}(F)}((\chi(\det{}_{m})\rtimes1_{\spp_{0}})^{\vee}\otimes(\chi(\det{}_{m})\rtimes1_{\spp_{0}})^{\iota},\cc)
\]
 is 1-dimensional. Fix a nonzero element $\mathfrak{L}$ in $\mathrm{Hom}_{\Delta(\spp_{2m}(F))}(J_{\psi_{U}^{-1}}(\rho_{m}^{0}),\cc)$
and put $L^{M}=c_{M}\mathfrak{L},$ where $c_{M}\in\cc$. Then, since
$F_{s,m}$ is well-defined and nonzero at $s=-m/2$, we have $c_{M}\mathfrak{L}(J(f_{-m/2}^{2m}))=0$
for $M>0$ and $c_{0}\mathfrak{L}(J(f_{-m/2}^{2m}))\neq0$. Thus $c_{M}=0$
for $M>0$ and $L=L_{-m/2}=c_{0}\mathfrak{L}$ is well-defined.
\end{proof}
And finally, we complete the proof of Theorem \ref{thm:Assume-that-}.
\begin{proof}[Proof of Theorem \ref{thm:Assume-that-} for $v=p<\infty$]
 By Proposition \ref{prop:Assume-that-} and Corollary \ref{cor:mc-1},
we have 
\[
\Phi(g)=\Delta_{\spp_{4n+4r}}F_{-n-r,2n+2r}(g)=(\Delta_{\spp_{4n+4r}}/\Delta_{\spp_{4n+2r}})F_{n,r}(g)
\]
 for $g\in\spp_{2r}(F).$ Thus we have

\begin{align*}
I(\Phi,\Psi) & =(\Delta_{\spp_{4n+4r}}/\Delta_{\spp_{4n+2r}})\int_{\spp_{2r}(F)}F_{n,r}(g)\overline{\Psi(g)}dg\\
 & =\frac{\Delta_{\spp_{4n+4r}}}{\Delta_{\spp_{2r}}\Delta_{\spp_{4n+2r}}}\frac{L(n+1/2,\pi\times\sigma)}{L(n+r+1/2,\sigma)\prod_{i=1}^{r}L(2n+2i-1,\sigma,\mathrm{Ad})\zeta(2n+2i)}.
\end{align*}
\end{proof}
\bibliographystyle{plain}
\bibliography{cloudbib}

\end{document}